\documentclass[11pt]{article}
\usepackage[margin=1in]{geometry}                
\usepackage{graphicx,url}
\usepackage{parskip}
\DeclareGraphicsRule{.tif}{png}{.png}{`convert #1 `dirname #1`/`basename #1 .tif`.png}
\usepackage[dvipsnames]{xcolor}
\usepackage{subcaption}
\newcounter{remarks}
\newenvironment{remarks}[1][]{\refstepcounter{remarks}\par\medskip
   \textbf{Remark~\theremarks. #1} \rmfamily}{\medskip}

\usepackage{tikz}
\usetikzlibrary{decorations.pathreplacing}                                                  
\usepackage{algorithm}
\usepackage[noend]{algpseudocode}
\usepackage{mathtools,amssymb,xcolor,cite}
\usepackage{amsthm}
\allowdisplaybreaks

\newcommand{\bi}{\begin{itemize}}
\newcommand{\ei}{\end{itemize}}
\newcommand{\vo}[1]{\textcolor[HTML]{000000}{\boldsymbol{#1}}}
\newcommand{\x}{\vo{x}}
\newcommand{\f}{\vo{f}}

\newcommand{\h}{\vo{h}}
\newcommand{\vb}{\vo{v}}
\newcommand{\Vb}{\vo{V}}

\newcommand{\X}{\vo{X}}

\newcommand{\y}{\vo{y}}
\newcommand{\Mu}{\vo{\mu}}
\newcommand{\K}{\vo{K}}

\newcommand{\Q}{\vo{Q}}

\newcommand{\R}{\vo{R}}
\newcommand{\Y}{\vo{Y}}
\renewcommand{\H}{\vo{H}}
\newcommand{\I}{\vo{I}}

\newcommand{\C}{\vo{C}}
\newcommand{\A}{\vo{A}}
\newcommand{\B}{\vo{B}}

\newcommand{\F}{\vo{F}}

\newcommand{\real}{\mathbb{R}}
\newcommand{\Exp}[1]{\mathbb{E}\left[#1\right]}

\newcommand{\Sigp}{\vo{\Sigma}^{-}}

\newcommand{\Sigpp}{\vo{\Sigma}^{+}}

\newcommand{\trace}[1]{\mathbf{tr}\left(#1\right)}
\newtheorem{theorem}{Theorem}
\newtheorem{corollary}{Corollary}

\usepackage{thmtools}

\declaretheoremstyle[headfont=\normalfont]{normalhead}

\newcommand{\eqnlabel}[1]{\label{eqn:#1}}
\newcommand{\figlabel}[1]{\label{fig:#1}}
\newcommand{\eqn}[1]{(\ref{eqn:#1})}
\newcommand{\fig}[1]{fig.(\ref{fig:#1})}
\newcommand{\Fig}[1]{Fig.(\ref{fig:#1})}
\newcommand{\Rdata}{\vo{\mathcal{R}}^\text{data}_{k+1}}
\newcommand{\Rpdata}{\vo{\mathcal{R}}^\text{data}_{k+1}}
\newcommand{\Rudata}{\vo{\mathcal{R}}^\text{data}_{k+1}}

\newcommand{\Sdata}{\vo{\mathcal{S}}^\text{data}_{k+1}}
\newcommand{\Rsens}{\vo{\mathcal{R}}^\text{sensor}_{k+1}}
\newcommand{\Rmeas}{\vo{\mathcal{R}}_{k+1}}

\newcommand{\Sbar}{\bar{\vo{\mathcal{S}}}^\text{data}_{k+1}}
\newcommand{\Rbar}{\bar{\vo{\mathcal{R}}}^\text{data}_{k+1}}

\newcommand{\Stil}{\tilde{\vo{\mathcal{S}}}^\text{data}_{k+1}}
\newcommand{\Rtil}{\tilde{\vo{\mathcal{R}}}^\text{data}_{k+1}}

\usepackage{url}

\title{Privacy and Utility Aware Data Sharing for Space Situational Awareness from Ensemble and Unscented Kalman  Filtering Perspective}
\author{\begin{tabular}{cc} Niladri Das & Raktim Bhattacharya\end{tabular}\\[2mm] Aerospace Engineering, Texas A\&M University, \\ College Station, TX 77845-3141.}
\date{}                                           

\begin{document}
\maketitle
\begin{abstract}
In this paper, we present an optimization based formulation for privacy-utility tradeoff in  the Ensemble and Unscented Kalman filtering framework, with focus on space situational awareness. Privacy and utility are defined in terms of lower and upper bound on the state estimation error covariance, respectively. Synthetic sensor noise is used to satisfy these bounds, and is determined by solving an optimization problem. Given privacy and utility bounds,  we present optimization problem formulations to determine a) the maximum noise for which utility is satisfied or the estimation errors are upper-bounded, b) the minimum noise for which privacy is satisfied or the estimation errors are lower-bounded, c) the optimal noise that satisfies utility constraints and maximizes privacy, and d) the optimal noise that satisfies privacy constraints and maximizes utility. We demonstrate application of these formulations to the tracking of the International Space Station, and highlight the optimal privacy vs utility tradeoff for this dynamical system.

\end{abstract}
\section{Introduction}
As space becomes more congested, maintaining a timely and accurate picture of space activities simultaneously becomes both more important and difficult. The seriousness of this problem has been highlighted by the 2009 collision between U.S. Iridium LLC and Russian Federation Cosmos satellites, which destroyed both the satellites and added more than 2,000 additional pieces of debris in space. Prior to that, in 2007, China used a hit-to-kill interceptor to destroy their Fengyun-1C satellite. Since then, U.S. Space Surveillance Network has cataloged more than 2,200 trackable debris fragments larger than 10 centimeters originating from this collision \cite{williamsen2008satellite}. It cannot be emphasized enough that as the number of assets orbiting Earth increases  the danger and effects of collisions also increases. The new Starlink Mission of SpaceX is planned to  have 12,000 satellites \cite{foust2018spacex}. With the increased number of space objects, we can no longer assume that the space is \textit{big} and collisions between space objects will rarely occur. 

In fact, close approaches and even collisions occur with increasing regularity, with at least a dozen collisions having occurred in LEO. There are also strong indicators of at least five collisions in GEO \cite{geo1, geo2, geo3, johnson2008history, mcknight2017preliminary} since the dawn of the space age. There could easily have been more collisions in both LEO and GEO that have not been publicly disclosed \cite{oltrogge2019technical}. More recently on September 2 in 2019, the European Space Agency (ESA) maneuvered one of its Earth science satellites Aeolus, to avoid a potential catastrophic collision with a SpaceX Starlink satellite \cite{foust_2019}. The maneuver took place just about half an orbit before closest approach, indicating the importance of maintaining situational awareness in space. 

Space situational awareness (SSA) refers to the ability to view, understand and predict the physical location of natural and manmade objects in orbit around the Earth, with the objective of avoiding collisions. To address the growing SSA challenge, the U.S. and other nations, along with commercial operators, have established a new approach to exchange information regarding space objects (USSTRATCOM \& Space Data Association), hoping to increase the safety of satellite operations. While there are significant benefits to sharing of data from the SSA perspective, there are several privacy/security  related concerns from both commercial and military perspectives, which lead to conservative data-sharing policy. For example, the policy for sharing SSA data from military owned sensors, unveiled by U.S. Strategic Command in 2014, has led to removal of more SSA data from public access. This includes removal of data on the estimated size of space objects in the public satellite catalog, and limitations on what data is provided privately to satellite operators. Many of these restrictions stem from the desire to hide national utility satellites and their activities \cite{letgo}. This has led to some operators question the accuracy, and especially the completeness, of the information provided to them by the US Department of Defense (DoD). For example, some South Korean government officials estimate that their country receives data on only about 40 percent of the objects tracked by the DoD, due to sensitivity of U.S. assets \cite{lal2018global}. This has resulted in lack of confidence in the DoD-provided data, especially in the commercial sector. This lack of confidence in the data also undermines any collision warning issued based on this data, since it is expensive for operators to perform a maneuver. Thus, a security-conscious data sharing policy, impedes utility and commercial growth of the space industry.

On the other hand, improving utility raises several privacy/security concerns. National security concerns have become more critical, owing to the recent development of anti-satellite weapons and other counter-space capabilities. Accurate knowledge of space assets present risks from directed energy weapons, electronic jamming, kinetic energy threats, and other orbital threats \cite{pelton2019space, hecht2019star, harrison2018space}. From a commercial perspective, there are concerns that access to highly accurate SSA data will allow operators to assess their competitor's coverage limits, detection capabilities, and details of operation. This can be detrimental to the emerging space economy. In summary, low-accuracy SSA data increases risk of collision but reduces risk from counter-space operations and protects details of operations, i.e. it improves privacy but degrades utility. Whereas, high-accuracy SSA data improves utility but degrades privacy/security.

Currently, the US military adds synthetic noise to the public domain SSA data, much like the early GPS data model, which impacts how accurately the space objects can be tracked. Currently, this noise level is chosen conservatively and is mostly privacy or national-security conscious. With the deployment of mega-constellations in low earth orbits \cite{radtke2017interactions}, this conservative approach will not work and will impede  accurate space traffic management. Consequently, in June 2018, the Space Policy Directive 3, directed the US Department of Defense to give the publicly releasable portion of its space situational awareness data to the Commerce Department \cite{space-directive3}. This initiative will allow non-military entities to create and sell SSA services to governments and satellite operators. Thus, the SSA data can be commoditized \cite{lal2018global}, with stratified pricing models reflecting different levels of accuracy, enabling creation of several value-added services in space-data analytics. These new developments in SSA data sharing, present an important question: \textit{what should be the accuracy in the SSA data that satisfies given utility and privacy objectives?} What we lack is a framework that addresses this question in a methodological manner, enabling more informed privacy and utility preserving policies for sharing SSA data.

Privacy in dynamical systems is an emerging area of work, and has been primarily in differential privacy \cite{dwork2011differential, mcsherry2007mechanism, dwork2014algorithmic,  cortes2016differential, koufogiannis2017differential, kawano2018differential,Farokhi_2019}, where the focus is to ensure that participation of an entity or an individual does not change the outcome significantly, and also guarantees privacy of an entity based on aggregated data from many entities. Our focus here is on inferential privacy \cite{song2017composition, ghosh2016inferential, sun2017inference}, where we are trying to bound the inferences an adversary can make based on auxiliary information. In both these kinds of privacy, the mechanism for data obfuscation is either corruption of data with synthetic noise, or projection of data to a lower dimensional space. In this paper, we use synthetic noise to formulate various algorithms for privacy-utility tradeoff.

Our focus is on privacy of entities that are governed by dynamical systems. The dynamical system, which is a space object in this paper, is observed by various geographically distributed sensors. These observations are functions of states, and are noisy due to imperfect sensing. A filtering process uses these measurements to estimate the true state of the space object, which also has errors. The accuracy of the estimates is a measure of inference privacy. More error results in more privacy, and consequently less utility. For dynamical systems with Gaussian uncertainty models, Kalman filter gives the minimum variance estimates, which is function of the prior, the sensor model, and the sensor noise. In \cite{sun2017inference}, the authors manipulate the measurement data by compressing it using a linear transformation, hereby regulating the error covariance matrix or the inference privacy. This linear transformation, which creates \textit{synthetic sensors}, is designed appropriately to achieve the privacy goals. In this paper, we regulate the inference privacy by manipulating the measurement noise covariance. This is done by adding synthetic noise to the measurements. 

In this paper, we look at the trade-off between privacy and utility in the Ensemble Kalman Filtering (EnKF) \cite{evensen2003ensemble} and Unscented Kalman Filtering (UKF) framework \cite{julier1997new}, which is commonly used for data assimilation and forecasting in SSA applications. We assume that the privacy and utility constraints are specified in terms of bounds on the error variances in the state-estimates. This paper presents a convex optimization framework that determines the optimal synthetic noise to be added to the sensor data, which will satisfy the given privacy and utility bounds. The formulation, however, is quite general and is not limited to SSA related problems. To the best of our knowledge, this is the first paper that addresses the privacy-utility trade-off using synthetic noise in the EnKF/UKF framework. Our future work will be on extending this work  to other particle-filtering algorithms \cite{McCabe_2014}.

This paper is organized as follows. We first present the model for the space object used in the privacy-utility  formulation. This is followed by a very brief technical summary of Ensemble and Unscented Kalman filtering.  Key technical contributions are presented  in \S\ref{sec:main_result}, as theorems \ref{thm:utility} to \ref{thm:utility-aware}, and corollary \ref{thm:utility3}. This is followed by an example that applies the proposed privacy-utility  formulation to a realistic space-object tracking problem and considers various scenarios. The paper concludes with a summary section.

\section{Background Material}
We next present preliminaries on dynamical models of space objects, their sources of uncertainty, and measurement models. This is followed by a brief description of Ensemble and Unscented Kalman filtering. In particular, how the estimate and error variances are computed from particles, and updated from available measurements. This section also defines all the notations used to present the technical material.

\subsection{System Model}
We assume the motion of a space object is given by the differential equation (Cowell's Formulation)
\begin{align}
\ddot{\vo{r}} = \frac{\mu_{\oplus}}{r^3}\vo{r} + \vo{a}_{pert} (\vo{r},\dot{\vo{r}},t,\vo{\psi}),
\eqnlabel{dynamics}
\end{align}
where $\vo{r}$ and $\dot{\vo{r}}$ are the position and velocity of the space object in 3 dimensions respectively. The vector $\vo{a}_{\text{pert}}$ encapsulates all the perturbing accelerations of the space object other than those due to the two-body point mass gravitational acceleration. These perturbations could be due to higher-order gravity terms, atmospheric drag, solar radiations, etc; and $\vo{\psi}\in\real^d$ parameterizes these effects. The perturbation involved in the dynamics of the satellite is essentially parametric \cite{vallado2001fundamentals}, which simplifies the uncertainty propagation algorithm considerably, and particle-based methods can be used \cite{halder2011dispersion}.

Sensor observations essentially provide range, azimuth and elevation information and are derived from signals of electromagnetic radiation. Sensors receive one-way and two-way data via radar or laser measurements, which have varying degree of accuracy for a particular sensing objective such as obtaining range, range-rate and angular information for a satellite. Most sensors can provide observations at far higher frequencies than required. Thus, data density is not an issue. However, the proximity of data to other information is a more serious concern in satellite tracking. In some cases, due to mechanical design,  geographical, and political constraints, observations are limited to a small arc of the orbit, which we denote as \textit{short-arc} observations. If a satellite is observed over multiple revolutions, it is referred to as \textit{long-arc} observation. Long-arc observations are preferred because they provide more accurate determination of the satellite's orbit. Short-arc observations on the other hand are important for more accurate near-term satellite motion. Thus the sensor data is available at multiple-time scales, and a data privacy-utility formulation must account for this multi-scale nature. For space situational awareness problems, keeping track of a satellite's location in its orbit is critical, which relies on the short-arc measurements. Data from these sensors are only available when the satellites are within sensor range. Thus, predictions must be made over long intervals between these sensor updates, which is associated with larger error growths. This impacts the tradeoff between accuracy and privacy. We expect that higher accuracy data is necessary to resolve errors after long-term uncertainty propagation, and lower accuracy data is necessary to resolve errors after long-term uncertainty propagation. That is, short-arc observations can have lower accuracy than long-arc observations, to achieve a given accuracy in the state-estimate. Consequently, the data privacy-utility policy must also be time varying, accounting for the variability in the data frequency.

Let $\vo{x}\in\real^{n}$ be the state vector describing the motion of a space object. While there exists many satellite based coordinates systems to choose from for orbital state-estimation, it is well known that many of these systems are ill defined for some circular and elliptical orbits \cite{vallado2001fundamentals}. The equinoctial system eliminates this difficulty and is the best choice to quantify uncertainty in satellite dynamics. However, the data privacy-utility formulation presented in this paper is agnostic to the choice of the coordinate system. 

Let $\vo{y}\in\real^{n_y}$ be the measurement variable (measuring range, range-rate and angular information for a satellite, etc), which is mathematically defined as
\begin{align}
\vo{y} = \vo{h}(\x) + \vo{v}, \eqnlabel{sensor}
\end{align}
where $\vo{v}$ represents sensor noise, assumed to be zero-mean Gaussian random process, and $\vo{x}$ is the system state vector. 

In this paper, we develop the data privacy-utility algorithm in discrete time, and write \eqn{dynamics} and \eqn{sensor} in discrete time as
\begin{align}
  \x_{k+1} &= \f(\x_k,\vo{\psi}_k),\eqnlabel{dynmodelm}\\
  \y_k &= \h(\x_k)+\vb_k\eqnlabel{measmodelm},
\end{align}
where $\f(\cdot,\cdot)$ represents the discrete-time dynamics and $\x_k := (x,y,z,\dot{x},\dot{y},\dot{z})^T$ is the state vector, and sensor noise $\vb_k$ is assumed to be independent zero-mean Gaussian random variable, i.e. $\vb_k \sim \mathcal{N}(0,\R_k)$ and $\Exp{\vb_k\vb_l^T}=\R_k\delta_{kl}$. We further assume that $\R_k$ is a diagonal matrix, and define the inverse of $\R_k$ as the precision matrix $\vo{S}_k$. Finally, the only uncertainty considered here is the initial condition uncertainty, which is also assumed to be Gaussian i.e. $\x_0\sim \mathcal{N}(\vo{\mu}_0,\vo{\Sigma}_0)$  and independent of $\{\vb_k\}$. Other variables have the same meaning as in \eqn{dynamics} and \eqn{sensor}.

To account for the multiple-time scales in the data, and exploit the periodic nature of the satellite motion, we augment the system in \eqn{dynamics} and \eqn{sensor} to capture the motion over $q$ time steps. Starting from time $kq$, for $k\in\{1, 2, \cdots\}$, we define the augmented discrete-time system dynamics as
\begin{align}
  \X_{k+1} = \F(\X_k,\vo{\Psi}_k), \ \Y_k = \H(\X_k)+\Vb_k,\eqnlabel{augmodel}
\end{align}
where,
\begin{equation}\left.
\begin{aligned}
  \X_k &:= [\x_{kq-q+1}^T,\cdots,\x_{kq}^T]^T,\\ 
  \Y_k &:= [\y_{kq-q+1}^T,\cdots,\y_{kq}^T]^T,\\
  \vo{\Psi}_k &:= [\vo{\psi}_{kq-q+1}^T,\cdots,\vo{\psi}_{kq}^T]^T,\\ 
  \Vb_k&:=[\vb_{kq-q+1}^T,\cdots,\vb_{kq}^T]^T\sim \mathcal{N}(\vo{0},\vo{\mathcal{R}}_k),\\
    \vo{\mathcal{R}}_k &:= \textbf{diag}(\R_{kq-q+1},\cdots,\R_{kq}),
  \end{aligned}\;\;\right\}\eqnlabel{augx}\\
  \end{equation}
   denotes stacked variables. Function $\F_k(.)$ is recursively generated using $\vo{f}(.)$. The initial condition $\X_0$ is determined by propagating the uncertain initial condition $\x_0$ over $q$ time steps. It should be noted that the augmented model represents a $q$-step $q$-shift process, instead of a $q$-step sliding-window process. Augmenting the model to $q$ time steps allows $\Y_k$ to include multi-rate sensor data $\y$, by defining $q$ to be the lowest common multiple of the various sensing rates, resulting in a $q$-periodic system. In this paper, we formulate the privacy-utility policy for the multi-rate data, using the system in \eqn{augmodel}.

\subsection{Review of Ensemble and Unscented Kalman Filter}
In this paper, the data privacy-utility policy is developed in the EnKF and UKF framework, which is summarized next. The filtering process for the augmented model \eqn{augmodel} consists of uncertainty propagation using the state-dynamics to obtain the prior or model predicted estimate, and using the data from measurements to obtain the posterior estimate. In EnKF, \textit{random} samples are generated directly from the state probability density function (PDF) using standard sampling techniques. Whereas, UKF uses a \textit{deterministic} sampling technique known as the unscented transformation to pick a minimal set of sample points (called sigma points) around the mean. In both these approaches, the sample points are propagated using the nonlinear dynamics, from which the prior mean and variance of the states are computed. These are updated using measurements to arrive at the  state estimate with minimum error variance. We briefly present next, the technical details of both these approaches, which will be necessary for formulating the data-privacy vs data-utility problem in the EnKF/UKF framework. 

\subsubsection{Ensemble Kalman Filter}
In this section, we briefly present ensemble Kalman filtering. Let $\vo{\mathcal{X}}_k^{+}\in\real^{nq\times N}$ be the matrix with $N$ number of \textit{posterior} samples $\X_k^{i+}$ at time $k$, i.e.
 $$\vo{\mathcal{X}}_k^{+} := \begin{bmatrix}\X^{1+}_k & \X^{2+}_k & \cdots & \X^{N+}_k \end{bmatrix}.$$
 
The sample mean is then given by,
$$\Mu_k^{+} := \Exp{\X_k^{+}} \approx \frac{1}{N}\sum_{i=1}^{N}\X_k^{i+} =  \frac{1}{N}\vo{\mathcal{X}}_k^{+}\vo{1}_N,
$$
where $\vo{1}_N\in\real^N$ is a column vector of $N$ ones.

Defining,
$$\bar{\vo{\mathcal{X}}}_k^{+}:=\begin{bmatrix}\Mu_k^{+} & \cdots & \Mu_k^{+}\end{bmatrix} = \Mu_k^+\vo{1}^T = \frac{1}{N}\vo{\mathcal{X}}_k^+\vo{1}_N\vo{1}_N^T,$$
we can compute the variance from the samples $\vo{\Sigma}_{xx,k}^{+}$ as,
\begin{align}
& \Exp{(\X_k^{i+}-\Mu_k^{+})(\X_k^{i+}-\Mu_k^{+})^T} \approx  \vo{\mathcal{X}}_k^+\A\vo{\mathcal{X}}_{k}^{+T}\eqnlabel{sigxx}.
\end{align}
where $$\A:=\left[\frac{1}{N-1}\left(\I_N-\frac{\vo{1}_N\vo{1}_N^T}{N}\right)\left(\I_N-\frac{\vo{1}_N\vo{1}_N^T}{N}\right)\right]$$
The state of each ensemble member at the next time step is determined using the dynamics model:
  \begin{align}
    {\X}_{k+1}^{i-} &= \F({\X}_{k}^{i+},\vo{\Psi}_k^{i}).\eqnlabel{sampledyn}
  \end{align}
  
  
In the EnKF framework, the model predicted ensembles (or prior ensembles) are corrected using measurements, as proposed by Evensen and Van Leeuwen \cite{evensen2003ensemble, evensen1996assimilation}
  \begin{align}
    {\X}_{k+1}^{i+} = {\X}_{k+1}^{i-} + \Sigp_{\vo{xy},k+1}\left(\Sigp_{\vo{yy},k+1}+\Rmeas\right)^{-1}\left(\Y_{k+1}-\vo{H}_{k+1}({\X}_{k+1}^{i-})+\boldsymbol{\epsilon}_k^i\right),
  \end{align}
where $\boldsymbol{\epsilon}_k^i$ is sampled from $ \mathcal{N}(\vo{0},\vo{\mathcal{R}}_k)$. 
Quantities $\Sigp_{\vo{xy},k+1}$ and $\Sigp_{\vo{yy},k+1}$ are computed from the samples using
\begin{align}
\Sigp_{\vo{xy},k+1} &:= \frac{1}{N-1}\left(\vo{\mathcal{X}}_{k+1}^- -\bar{\vo{\mathcal{X}}}_{k+1}^-\right)\left(\H_{k+1}(\vo{\mathcal{X}}_{k+1}^-)-\H_{k+1}(\bar{\vo{\mathcal{X}}}_{k+1}^-)\right)^T\eqnlabel{xysig}, \text{ and } \\
\Sigp_{\vo{yy},k+1} & :=  \frac{1}{N-1}\left(\H_{k+1}(\vo{\mathcal{X}}_{k+1}^-)-\H_{k+1}(\bar{\vo{\mathcal{X}}}_{k+1}^-)\right)\left(\H_{k+1}(\vo{\mathcal{X}}_{k+1}^-)-\H_{k+1}(\bar{\vo{\mathcal{X}}}_{k+1}^-)\right)^T\eqnlabel{yysig}.
\end{align}
  

The variance update equation of the augmented model is given by
\begin{align}
  \Sigpp_{\vo{xx},k+1} = \Sigp_{\vo{xx},k+1} - \Sigp_{\vo{xy},k+1}\left(\Sigp_{\vo{yy},k+1}+\Rmeas\right)^{-1}{{\Sigp}_{\vo{xy},k+1}^T},\eqnlabel{sigR}
\end{align}
where $\Sigp_{\vo{xx},k+1}=\vo{\mathcal{X}}_{k+1}^-\A\vo{\mathcal{X}}_{k+1}^{-T}$. Later in the paper, we will be using \eqn{sigR} to determine the optimal privacy-utility tradeoff.

\subsubsection{Unscented Kalman Filter}
The main difference between EnKF and UKF is the generation of the samples and computation of the first two moments from the samples.
%
The dynamic update step from $k\rightarrow k+1$ starts with generating deterministic points  called $\sigma$ points. Our dynamic model has no process noise.  To capture the mean $\vo{\mu}_k^{+}$ of the state vector $\X_k^{+}$, where $\X_k^{+}\in\mathbb{R}^{nq}$, as well as the error covariance $\vo{\Sigma}_{xx,k}^{+}$ 
the sigma points are chosen as 
\begin{align}
\X^{0+}_k &=  {\vo{\mu}_k^+}, \nonumber\\
\X^{i+}_k &=  {\vo{\mu}_k^+} + \Big(\sqrt{(nq+\rho) \vo{\Sigma}_{\vo{xx},k}^+}\Big)_i, i=1,...,nq, \nonumber\\
\X^{i+}_k &= { \vo{\mu}_k^+} - \Big(\sqrt{(nq+\rho) \vo{\Sigma}_{\vo{xx},k}^+}\Big)_{i-nq} ,i=nq+1,...,2nq,\nonumber
\end{align}
with associated weights as 
\begin{align}
\vo{\omega}_{0}^{(m)} &= \rho/(nq+\rho),\nonumber\\
\vo{\omega}_{0}^{(c)} &= \rho/(nq+\rho) + (1-\alpha^2+\beta),\nonumber\\
\vo{\omega}_{i}^{(m)} &=  1/\{2(nq+\rho)\}.\nonumber
\end{align}
The weight vectors are:
\begin{align}
\vo{\mathcal{W}}^{m} &= [\vo{\omega}_{0}^{(m)} \vo{\omega}_{1}^{(m)} ... \ \vo{\omega}_{2nq+1}^{(m)}  ]^T,\nonumber\\
\vo{\mathcal{W}}^{c} &= [\vo{\omega}_{0}^{(c)} \vo{\omega}_{1}^{(c)} ... \ \vo{\omega}_{2nq+1}^{(c)}  ]^T,\nonumber
\end{align}
where $\rho=\alpha^2(nq+\kappa)-nq$ is the scaling parameter, $\alpha$ is set to $0.001$, $\kappa$ is set to 0, and $\beta$ is 2 in this work. The term $\Big(\sqrt{(nq+\rho) \vo{\Sigma}_{xx,k}^+}\Big)_i$ represents $i$th row of the matrix square root. 


The propagated state of each ensemble member at time $k+1$ is generated exactly as EnKF by using \eqn{sampledyn}. However for UKF, the corresponding prior mean and variance at time $k+1$ are given by
\begin{align}
{\vo{\mu}}_{k+1}^{-} &= \vo{\mathcal{X}}_{k+1}^{-}\vo{\mathcal{W}}^{m}, \\
\vo{\Sigma}_{xx,k+1}^{-}&=\vo{\mathcal{X}}_{k+1}^{-}\B_k\vo{\mathcal{X}}_{k+1}^{-T},
\end{align}
where $\B_k:=\vo{L}\vo{L}^T$, and $\vo{L}:=\textbf{diag}(\vo{\mathcal{W}}^{c})-\vo{\mathcal{W}}^{c}\vo{1}_{2nq+1}^T$.

We next define the following terms that will be used in the following measurement update phase,
\begin{align}
\vo{\mathcal{Y}}_{k+1}^- & :=\vo{H}(\vo{\mathcal{X}}_{k+1}^-),\\ 
\bar{\vo{\mathcal{Y}}}_{k+1}^{-} & := \vo{\mathcal{Y}}_{k+1}^{-}\vo{\mathcal{W}}^{m}\vo{1}_{2nq+1}^T, \text{ and }\\
\bar{\vo{\mathcal{X}}}_{k+1}^{-}& := \vo{\mathcal{X}}_{k+1}^{-}\vo{\mathcal{W}}^{m}\vo{1}_{2nq+1}^T,
\end{align}
where  
\begin{align}
\vo{\mathcal{Y}}_{k+1}^{-} & := \begin{bmatrix}\Y^{1-}_{k+1} & \Y^{2-}_{k+1} & \cdots & \Y^{(2nq+1)-}_{k+1} \end{bmatrix}, \\
\vo{\mathcal{X}}_{k+1}^{-} &:= \begin{bmatrix}\X^{1-}_{k+1} & \X^{2-}_{k+1} & \cdots & \X^{(2nq+1)-}_{k+1} \end{bmatrix}.
\end{align}
were $\Y^{i-}_{k+1} = \vo{H}(\vo{X}_{k+1}^{i-})$ is the $i^{\mathrm{th}}$ measurement sample generated using the measurement model without the measurement noise.

For the measurement update step, we first calculate $\vo{\Sigma}_{\vo{xy},k+1}^{-}$ and $\vo{\Sigma}_{{\vo{yy}},k+1}^{-}$ as
\begin{align}\vo{\Sigma}_{\vo{xy},k+1}^{-}& :=\left(\vo{\mathcal{X}}_{k+1}^--\bar{\vo{\mathcal{X}}}_{k+1}^-\right)\times \textbf{diag}\left(\vo{\mathcal{W}}^{c}\right)\left(\vo{\mathcal{Y}}_{k+1}^{-}-\bar{\vo{\mathcal{Y}}}_{k+1}^{-}\right)^T\eqnlabel{ukf1}\\
\vo{\Sigma}_{\vo{yy},k+1}^{-}&:=\left(\vo{\mathcal{Y}}_{k+1}^{-}-\bar{\vo{\mathcal{Y}}}_{k+1}^{-}\right)\times \textbf{diag}\left(\vo{\mathcal{W}}^{c}\right)\left(\vo{\mathcal{Y}}_{k+1}^{-}-\bar{\vo{\mathcal{Y}}}_{k+1}^{-}\right)^T\eqnlabel{ukf2},
\end{align}
and then obtain the updated variance using \eqn{sigR}.

\begin{remarks}
Since the variance update equation for EnKF and UKF are identical, this allows us to formulate a common data privacy-utility policy for both the filtering frameworks. Equation \eqn{sigR} is fundamental in determining the optimal noise for privacy-utility aware data-sharing.
\end{remarks}

\section{Privacy-Utility Aware Data Sharing in EnKF and UKF}\label{sec:main_result}
As defined in \eqn{sigR}, the accuracy of the state-estimate from EnKF/UKF is quantified by $\Sigpp_{\vo{xx},k+1}$, which is defined as
$$
  \Sigpp_{\vo{xx},k+1} = \Sigp_{\vo{xx},k+1} - \Sigp_{\vo{xy},k+1}\left(\Sigp_{\vo{yy},k+1}+\Rmeas\right)^{-1}{{\Sigp}_{\vo{xy},k+1}^T},
$$
and is the minimum variance for a given $\vo{\mathcal{R}}_{k+1}$. Clearly, changing $\vo{\mathcal{R}}_{k+1}$ will change $\Sigpp_{\vo{xx},k+1}$. From an estimation perspective, $\vo{\mathcal{R}}_{k+1}$ is given by the sensing hardware, and defines the noise in the measurements $\Y_k$. However, from a data-sharing perspective, we are concerned with the privacy-utility tradeoff in sharing $\Y_k$ with end users, which can be influenced by changing  $\vo{\mathcal{R}}_{k+1}$. For a state variable, larger error covariance results in increase in privacy and decrease in its utility. On the other hand, a smaller error covariance, leads to increase in utility and decrease in privacy.

In this paper, we formulate a data privacy-utility trade-off policy in terms of synthetic noise to be added to $\Y_k$ to regulate how accurately end users are able to estimate the satellite states in the EnKF/UKF framework. Consequently, we modify \eqn{sigR}, by defining 
$$
\Rmeas := \Rsens + \Rdata,
$$
where $\Rsens$ is the known sensor noise variance and quantifies the accuracy of the measured data, and $\Rdata$ defines the additional synthetic noise to be determined that should be added to the measured data to achieve a privacy-utility tradeoff. Usually, the sensor noise is assumed to be uncorrelated, which results in a diagonal $\Rsens$ with positive entries. However, $\Rdata$ can be assumed to be a general positive semidefinite matrix. 

Therefore, $\Sigpp_{\vo{xx},k+1}$ is now defined as
\begin{align}
  \Sigpp_{\vo{xx},k+1} = \Sigp_{\vo{xx},k+1} - \Sigp_{\vo{xy},k+1}\left(\Sigp_{\vo{yy},k+1}+\Rsens + \Rdata\right)^{-1}{{\Sigp}_{\vo{xy},k+1}^T}. \eqnlabel{noise:privacy}
\end{align}

We generalize the formulation by defining utility in terms of variable $\vo{X}_u := \vo{M}_u\vo{X}$, 
and privacy in terms of variable $\vo{X}_p := \vo{M}_p\vo{X}$, where $\vo{M}_u$, and $\vo{M}_p$ are known matrices. Partitioning the augmented state space into privacy and utility variable is motivated by the work in \cite{Song_2018}. The authors used it to partition the state variable at a particular instant into privacy and utility variables, i.e. partitioning along the dimension of the variable. In this paper, since we augment the state-vector to include time-series data, our partitioning matrix $\vo{M}_p$ and $\vo{M}_u$ partitions the augmented state-vector in both space and time. Consequently, we can achieve privacy-utility tradeoffs in space and time, and are highlighted in the examples.

The error variance in the estimates for $\vo{X}_u$ and $\vo{X}_p$ are given by
\begin{align}
\nonumber \Sigpp_{\vo{x}_u\vo{x}_u,k+1} &:= \vo{M}_u\Sigpp_{\vo{x}\vo{x},k+1}\vo{M}^T_u,\\
& = \vo{M}_u\Sigp_{\vo{x}\vo{x},k+1}\vo{M}^T_u - \vo{M}_u\Sigp_{\vo{x}\vo{y},k+1}\left(\Sigp_{\vo{yy},k+1}+\Rsens + \Rdata\right)^{-1}{{\Sigp}_{\vo{x}\vo{y},k+1}^T}\vo{M}^T_u, \eqnlabel{xu:err} \text{ and } \\
\nonumber \Sigpp_{\vo{x}_p\vo{x}_p,k+1} &:= \vo{M}_p\Sigpp_{\vo{x}\vo{x},k+1}\vo{M}^T_p, \\
 &= \vo{M}_p\Sigp_{\vo{x}\vo{x},k+1}\vo{M}^T_p - \vo{M}_p\Sigp_{\vo{x}\vo{y},k+1}\left(\Sigp_{\vo{yy},k+1}+\Rsens + \Rdata\right)^{-1}{{\Sigp}_{\vo{x}\vo{y},k+1}^T}\vo{M}^T_p.\eqnlabel{xp:err}
\end{align}

Thus, the objective here is to determine $\Rdata$ that allows end users to estimate states $\vo{X}_u$ accurately enough, but not estimate states $\vo{X}_p$ too accurately, i.e. given  $\vo{\Sigma}_p$ and $\vo{\Sigma}_u$, we would like to determine $\Rdata$ that satisfies $\vo{\Sigma}_p \leq \Sigpp_{\vo{x}_p\vo{x}_p,k+1}$ and $\Sigpp_{\vo{x}_u\vo{x}_u,k+1} \leq \vo{\Sigma}_u$ simultaneously, where $\vo{\Sigma}_p$ defines the accuracy limit from a privacy perspective and  $\vo{\Sigma}_u$ defines the accuracy limit from a utility perspective. 

Since $\Sigpp_{\vo{x}_p\vo{x}_p,k+1}$ and $\Sigpp_{\vo{x}_u\vo{x}_u,k+1}$ depend on $\Rdata$, they are upper and lower bounded by values obtained for $\Rdata = \vo{\infty}$ and  $\Rdata = \vo{0}$ respectively, i.e.
\begin{align} 
\Sigpp_{\vo{x}_p\vo{x}_p,k+1}(0) &\le \Sigpp_{\vo{x}_p\vo{x}_p,k+1}(\Rdata) \le \Sigpp_{\vo{x}_p\vo{x}_p,k+1}(\infty) = \Sigp_{\vo{x}_p\vo{x}_p,k+1},\\
\Sigpp_{\vo{x}_u\vo{x}_u,k+1}(0) &\le \Sigpp_{\vo{x}_u\vo{x}_u,k+1}(\Rdata) \le \Sigpp_{\vo{x}_p\vo{x}_u,k+1}(\infty) = \Sigp_{\vo{x}_u\vo{x}_u,k+1},
\end{align}
where $\Sigpp_{\vo{x}_p\vo{x}_p,k+1}(\cdot)$ and $\Sigpp_{\vo{x}_u\vo{x}_u,k+1}(\cdot)$ denote the functional dependence of the variances on $\Rdata$. Consequently, these inequalities define the limits on the achievable privacy and utility. 

We next present the main results of this paper, which are convex optimization formulations for achieving optimal privacy-utility tradeoff in the general EnKF/UKF framework. We use a relaxed definition for privacy and utility, by defining them in terms of trace of the variance matrices, i.e.
\begin{subequations}
\begin{align}
\text{Privacy: } & \trace{\Sigpp_{\vo{x}_p\vo{x}_p,k+1}} \geq \gamma_p,\\
\text{Utility: } & \trace{\Sigpp_{\vo{x}_u\vo{x}_u,k+1}} \leq \gamma_u,
\end{align}
\eqnlabel{util-priv}
\end{subequations}
where $\gamma_p$ and $\gamma_u$ are user defined.


\subsection{Maximum Noise Satisfying Utility Constraints} 
Here we present a convex optimization problem, which determines the \textit{maximum} synthetic noise that can be added and still satisfy the upper bound on $\trace{\Sigpp_{\vo{x}_u\vo{x}_u,k+1}}$ and is given by the following theorem.\\

\begin{theorem}
     \label{thm:utility}
     The maximum noise that satisfies $\trace{\Sigpp_{\vo{x}_u\vo{x}_u,k+1}} \le \gamma_u $, is given by the solution of the following optimization problem
     
\begin{subequations}
\begin{align}
\min_{\Sdata \ge 0, \vo{Q}_u \ge 0} \trace{\Sdata},\eqnlabel{xu:cost}
\end{align}
subject to
\begin{align}
&\begin{bmatrix}
\vo{Q}_u - \vo{M}_u\Sigp_{\vo{x}\vo{x},k+1}\vo{M}^T_u + \vo{M}_u\Sigp_{\vo{x}\vo{y},k+1}\vo{Z}^{-1}{{\Sigp}_{\vo{x}\vo{y},k+1}^T}\vo{M}^T_u & \vo{M}_u\Sigp_{\vo{x}\vo{y},k+1} \\
{{\Sigp}_{\vo{x}\vo{y},k+1}^T}\vo{M}^T_u & \vo{Z} + \vo{Z}\Sdata\vo{Z}
\end{bmatrix} \ge 0, \eqnlabel{xu:LMI}\\ 
&\trace{\vo{Q}_u} \leq \gamma_u, \eqnlabel{xu:privacy}
\end{align}\eqnlabel{xu:opt}
\end{subequations}    
where $\vo{Z}:= \Sigp_{\vo{yy},k+1}+\Rsens$. The maximum noise in the data for which the utility constraint is satisfied is then given by $\Rdata := \left(\Sdata\right)^{-1}$.
\end{theorem}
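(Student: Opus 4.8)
The plan is to prove that the semidefinite program \eqn{xu:opt} is an exact reformulation of ``find the largest synthetic noise $\Rdata$ for which $\trace{\Sigpp_{\vo{x}_u\vo{x}_u,k+1}} \le \gamma_u$'', and that it is convex. I would organize the argument in three moves: reparametrize the noise by its precision, linearize the utility constraint with a slack matrix, and convert the remaining nonlinear matrix inequality into the LMI \eqn{xu:LMI} via a Schur complement.

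First I would set $\Sdata := (\Rdata)^{-1}$, the precision of the added noise, which exists as long as $\Rdata \succ 0$. Because the matrix inverse is operator-monotone decreasing, $\Rdata = (\Sdata)^{-1}$ grows as $\Sdata$ shrinks in the positive-semidefinite order, so making $\Sdata$ small (in particular in trace) is exactly what maximizes the injected noise. Hence ``maximize noise'' is scalarized as the linear objective $\min \trace{\Sdata}$ in \eqn{xu:cost}.

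Next I would replace $\trace{\Sigpp_{\vo{x}_u\vo{x}_u,k+1}} \le \gamma_u$ by the equivalent statement that there is a matrix $\vo{Q}_u \ge 0$ with $\vo{Q}_u \ge \Sigpp_{\vo{x}_u\vo{x}_u,k+1}$ and $\trace{\vo{Q}_u} \le \gamma_u$. The equivalence is immediate: if the trace bound holds I take $\vo{Q}_u = \Sigpp_{\vo{x}_u\vo{x}_u,k+1}$, and conversely monotonicity of the trace over the PSD cone gives $\trace{\Sigpp_{\vo{x}_u\vo{x}_u,k+1}} \le \trace{\vo{Q}_u} \le \gamma_u$. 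This isolates the single nonconvex ingredient, namely the matrix inequality $\vo{Q}_u \ge \Sigpp_{\vo{x}_u\vo{x}_u,k+1}$, in which $\Rdata$ appears through $(\vo{Z} + \Rdata)^{-1}$ with $\vo{Z} := \Sigp_{\vo{yy},k+1}+\Rsens$.

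The crux is converting this inequality into \eqn{xu:LMI}. Using \eqn{xu:err} it reads $\vo{Q}_u - \vo{M}_u\Sigp_{\vo{x}\vo{x},k+1}\vo{M}^T_u + \vo{M}_u\Sigp_{\vo{x}\vo{y},k+1}(\vo{Z}+\Rdata)^{-1}\Sigp_{\vo{x}\vo{y},k+1}^T\vo{M}^T_u \ge 0$. I would observe that the lower-right block of \eqn{xu:LMI}, $\vo{Z}+\vo{Z}\Sdata\vo{Z} = \vo{Z}(\vo{Z}^{-1}+\Sdata)\vo{Z}$, is positive definite ($\vo{Z}\succ 0$, $\Sdata \ge 0$), so the LMI is equivalent to its Schur complement $\vo{Q}_u - \vo{M}_u\Sigp_{\vo{x}\vo{x},k+1}\vo{M}^T_u + \vo{M}_u\Sigp_{\vo{x}\vo{y},k+1}\big[\vo{Z}^{-1} - \vo{Z}^{-1}(\vo{Z}^{-1}+\Sdata)^{-1}\vo{Z}^{-1}\big]\Sigp_{\vo{x}\vo{y},k+1}^T\vo{M}^T_u \ge 0$. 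The main obstacle is the bracketed term; here the matrix-inversion lemma gives $\vo{Z}^{-1} - \vo{Z}^{-1}(\vo{Z}^{-1}+\Sdata)^{-1}\vo{Z}^{-1} = (\vo{Z}+\Sdata^{-1})^{-1} = (\vo{Z}+\Rdata)^{-1}$, which collapses the Schur complement exactly to $\vo{Q}_u \ge \Sigpp_{\vo{x}_u\vo{x}_u,k+1}$. Finally I would note that \eqn{xu:LMI} is affine in the decision variables $\Sdata$ and $\vo{Q}_u$ (the term $\vo{Z}\Sdata\vo{Z}$ is linear in $\Sdata$, every $\vo{Z}^{-1}$ term being a constant), and that \eqn{xu:cost} and \eqn{xu:privacy} are linear, so \eqn{xu:opt} is a genuine convex (SDP) problem whose optimizer yields the maximum noise $\Rdata = (\Sdata)^{-1}$.
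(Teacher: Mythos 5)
Your proposal is correct and follows essentially the same route as the paper's proof: the same slack matrix $\vo{Q}_u$ with trace bound, the same precision substitution $\Sdata := (\Rdata)^{-1}$, the same matrix inversion (Hua/Woodbury) identity relating $(\vo{Z}+\Rdata)^{-1}$ to $\vo{Z}^{-1}$ and $\vo{Z}+\vo{Z}\Sdata\vo{Z}$, and the same Schur complement step. The only differences are cosmetic—you run the Schur complement argument from the LMI back to the constraint rather than forward, and you add welcome (but not essential) detail on the positive definiteness of the lower-right block and the equivalence of the trace relaxation.
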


\begin{proof}
Using \eqn{xu:err}, $\trace{\Sigpp_{\vo{x}_u\vo{x}_u,k+1}} \le \gamma_u$, is equivalent to
\begin{align*}
 \vo{Q}_u - \vo{M}_u\Sigp_{\vo{x}\vo{x},k+1}\vo{M}^T_u + \vo{M}_u\Sigp_{\vo{x}\vo{y},k+1}\left(\Sigp_{\vo{yy},k+1}+\Rsens + \Rudata\right)^{-1}{{\Sigp}_{\vo{x}\vo{y},k+1}^T}\vo{M}^T_u \ge 0,
 \end{align*}
 and $\trace{\vo{Q}_u} \leq \gamma_u$. Using matrix inversion lemma (Hua's identity), we get 
 \begin{align*}
 \left(\underbrace{\Sigp_{\vo{yy},k+1}+\Rsens}_{:=\vo{Z}} + \Rudata\right)^{-1} =
 \vo{Z}^{-1} - \left\{\vo{Z} + \vo{Z}\left(\Rudata\right)^{-1}\vo{Z}\right\}^{-1}.
 \end{align*}

Therefore, the inequality becomes 
\begin{align*}
\vo{Q}_u - \vo{M}_u\Sigp_{\vo{x}\vo{x},k+1}\vo{M}^T_u + \vo{M}_u\Sigp_{\vo{x}\vo{y},k+1}\left[\vo{Z}^{-1} - \left\{\vo{Z} + \vo{Z}\left(\Rudata\right)^{-1}\vo{Z}\right\}^{-1}\right]{{\Sigp}_{\vo{x}\vo{y},k+1}^T}\vo{M}^T_u \ge 0,
 \end{align*}
or
\begin{multline*}
 \vo{Q}_u - \vo{M}_u\Sigp_{\vo{x}\vo{x},k+1}\vo{M}^T_u + \vo{M}_u\Sigp_{\vo{x}\vo{y},k+1}\vo{Z}^{-1}{{\Sigp}_{\vo{x}\vo{y},k+1}^T}\vo{M}^T_u - \\ \vo{M}_u\Sigp_{\vo{x}\vo{y},k+1}\left\{\vo{Z} + \vo{Z}\left(\Rudata\right)^{-1}\vo{Z}\right\}^{-1}{{\Sigp}_{\vo{x}\vo{y},k+1}^T}\vo{M}^T_u \ge 0,
 \end{multline*}
 or using Schur complement we get
\begin{align*}
\begin{bmatrix}
\vo{Q}_u - \vo{M}_u\Sigp_{\vo{x}\vo{x},k+1}\vo{M}^T_u + \vo{M}_u\Sigp_{\vo{x}\vo{y},k+1}\vo{Z}^{-1}{{\Sigp}_{\vo{x}\vo{y},k+1}^T}\vo{M}^T_u & \vo{M}_u\Sigp_{\vo{x}\vo{y},k+1} \\
{{\Sigp}_{\vo{x}\vo{y},k+1}^T}\vo{M}^T_u & \vo{Z} + \vo{Z}\left(\Rudata\right)^{-1}\vo{Z}
\end{bmatrix} \ge 0.
\end{align*}

Introducing a new variable $\Sdata := \left(\Rudata\right)^{-1}$, which is the precision of the data, we get the LMI in \eqn{xu:LMI}. Therefore, maximization of $\trace{\Rudata}$ becomes minimization of  $\trace{\Sdata}$. 
\end{proof}

The above optimization is performed every time a new batch of $\Y_{k+1}$ is shared, which is corrupted using $\Rudata$.

For a special case of linear sensor model, i.e. $\H(\X_k) := \C\X_k$, we can substitute $\Sigp_{\vo{x}\vo{y}} := \Sigp_{\x\x}\C^T$ and $\Sigp_{\vo{y}\vo{y}} := \C\Sigp_{\x\x}\C^T$, in the above optimization problem.

\begin{remarks}
In the above theorem, we need to compute the inverse of $\vo{Z}:=(\Sigp_{\vo{yy},k+1}+\Rsens)$, which may be ill-conditioned, particularly when $\Sigp_{\vo{yy},k+1}$ is rank deficient and $\Rsens$ is small (corresponding to very precise sensor measurements, for example from laser ranging). This problem occurs in satellite tracking problems. We next present an alternate formulation, which does not require the matrix inverse, but the matrix square root, at the expense of increasing the problem size. This is given by the following result, assuming linear measurement model.
\end{remarks}

\begin{theorem}
\label{thm:utility2}
The maximum noise that satisfies $\trace{\Sigpp_{\vo{x}_u\vo{x}_u,k+1}} \le \gamma_u $, is given by the solution of the following optimization problem
\begin{subequations}
\begin{align}
\min_{\Sdata\ge 0, \,\vo{Q}_u \ge 0, \, \K} \trace{\Sdata},\eqnlabel{xu:cost1}
\end{align}
\text{subject to }
\begin{align}
&\begin{bmatrix}
\vo{Q}_u &\vo{M}_u(\I-\K\C)\sqrt{\Sigp_{\vo{x}\vo{x},k+1}} & \vo{M}_u\K & \vo{M}_u\K \\
\sqrt{\Sigp_{\vo{x}\vo{x},k+1}}(\I-\K\C)^T\vo{M}^T_u & \I & \vo{0} & \vo{0} \\
\K^T\vo{M}^T_u & \vo{0} & \left(\Rsens\right)^{-1} & \vo{0}\\
\K^T\vo{M}^T_u & \vo{0} & \vo{0} & \Sdata
\end{bmatrix} \ge 0, \eqnlabel{xu:LMI1}\\
&\trace{\Q_u} \le \gamma_u.
\end{align}
\end{subequations}
\end{theorem}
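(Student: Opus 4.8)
The plan is to sidestep the inversion of $\vo{Z} := \Sigp_{\vo{yy},k+1}+\Rsens$ by reintroducing the Kalman gain as an explicit optimization variable and working with the Joseph (factored) form of the posterior covariance. First I would specialize to the linear measurement model $\H(\X_k) := \C\X_k$, so that $\Sigp_{\vo{x}\vo{y},k+1} = \Sigp_{\vo{x}\vo{x},k+1}\C^T$ and $\Sigp_{\vo{y}\vo{y},k+1} = \C\Sigp_{\vo{x}\vo{x},k+1}\C^T$. For an arbitrary gain $\K$ and total measurement noise $\Rmeas = \Rsens + \Rdata$, the posterior covariance admits the Joseph form
$$\Sigpp_{\vo{x}\vo{x},k+1}(\K) = (\I-\K\C)\Sigp_{\vo{x}\vo{x},k+1}(\I-\K\C)^T + \K\Rsens\K^T + \K\Rdata\K^T,$$
which is manifestly a sum of three congruence (``squared'') terms and never requires inverting $\vo{Z}$.

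The key structural fact I would invoke is that the optimal gain $\K^\star = \Sigp_{\vo{x}\vo{y},k+1}(\vo{Z}+\Rdata)^{-1}$ minimizes $\Sigpp_{\vo{x}\vo{x},k+1}(\K)$ over all $\K$ in the L\"owner (positive-semidefinite) order, and that at $\K=\K^\star$ the Joseph form collapses to the expression in \eqn{noise:privacy}. Hence $\Sigpp_{\vo{x}_u\vo{x}_u,k+1} = \vo{M}_u\Sigpp_{\vo{x}\vo{x},k+1}(\K^\star)\vo{M}^T_u = \min_{\K}\vo{M}_u\Sigpp_{\vo{x}\vo{x},k+1}(\K)\vo{M}^T_u$. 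This lets me replace $\trace{\Sigpp_{\vo{x}_u\vo{x}_u,k+1}}\le\gamma_u$ by the relaxed pair $\vo{Q}_u \ge \vo{M}_u\Sigpp_{\vo{x}\vo{x},k+1}(\K)\vo{M}^T_u$ with $\K$ free together with $\trace{\vo{Q}_u}\le\gamma_u$. Both implications must be checked: taking $\K=\K^\star$, $\vo{Q}_u = \Sigpp_{\vo{x}_u\vo{x}_u,k+1}$ shows the original constraint makes the relaxed problem feasible; conversely, for any feasible $\K$ one has $\vo{Q}_u \ge \vo{M}_u\Sigpp_{\vo{x}\vo{x},k+1}(\K)\vo{M}^T_u \ge \vo{M}_u\Sigpp_{\vo{x}\vo{x},k+1}(\K^\star)\vo{M}^T_u = \Sigpp_{\vo{x}_u\vo{x}_u,k+1}$ (congruence by $\vo{M}_u$ preserves the L\"owner order), so that $\trace{\Sigpp_{\vo{x}_u\vo{x}_u,k+1}}\le\trace{\vo{Q}_u}\le\gamma_u$.

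The next step is to turn the matrix inequality $\vo{Q}_u \ge \vo{M}_u\Sigpp_{\vo{x}\vo{x},k+1}(\K)\vo{M}^T_u$ into the LMI \eqn{xu:LMI1}. Writing $\Sigp_{\vo{x}\vo{x},k+1} = \sqrt{\Sigp_{\vo{x}\vo{x},k+1}}\sqrt{\Sigp_{\vo{x}\vo{x},k+1}}$ and substituting $\Sdata := \left(\Rdata\right)^{-1}$ so that $\K\Rdata\K^T = \K\left(\Sdata\right)^{-1}\K^T$, each of the three right-hand terms has the form $\vo{M}_u\vo{G}_j\vo{H}_j^{-1}\vo{G}_j^T\vo{M}^T_u$ with pivots $\vo{H}_1 = \I$, $\vo{H}_2 = \left(\Rsens\right)^{-1}$ (appearing inverted, hence $\Rsens$ reconstructed), and $\vo{H}_3 = \Sdata$. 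A single multi-block Schur complement with the block-diagonal pivot $\textbf{diag}\big(\I,\left(\Rsens\right)^{-1},\Sdata\big)$ then reproduces exactly the off-diagonal blocks $\vo{M}_u(\I-\K\C)\sqrt{\Sigp_{\vo{x}\vo{x},k+1}}$, $\vo{M}_u\K$, $\vo{M}_u\K$ of \eqn{xu:LMI1}. Finally, maximizing the added noise $\trace{\Rdata}$ is posed, exactly as in Theorem~\ref{thm:utility}, as the minimization of $\trace{\Sdata}$.

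The main obstacle I anticipate is the justification that introducing the free gain $\K$ leaves the optimum unchanged: this rests on the L\"owner-order optimality of the Kalman gain combined with monotonicity of $\vo{A}\mapsto\vo{M}_u\vo{A}\vo{M}^T_u$ and of the trace under this ordering, and getting both implications in the correct direction is what makes the two formulations genuinely equivalent rather than a one-sided relaxation. The remaining work — the symmetric-square-root factorization and the multi-block Schur complement — is mechanical, the only bookkeeping subtlety being the placement of $\left(\Rsens\right)^{-1}$ versus $\Rsens$ and of $\sqrt{\Sigp_{\vo{x}\vo{x},k+1}}$ in the LMI blocks.
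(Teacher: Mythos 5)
Your proposal is correct and follows essentially the same route as the paper's proof: write the posterior covariance in Joseph form with the gain $\K$ as an explicit decision variable, substitute the precision $\Sdata := \left(\Rdata\right)^{-1}$, apply a block Schur complement with pivot $\textbf{diag}\left(\I,\left(\Rsens\right)^{-1},\Sdata\right)$ to obtain \eqn{xu:LMI1}, and recast maximization of $\trace{\Rdata}$ as minimization of $\trace{\Sdata}$. The one thing you add beyond the paper is the explicit two-sided justification—via L\"owner-order optimality of the Kalman gain and monotonicity of congruence by $\vo{M}_u$ and of the trace—that freeing $\K$ gives an equivalent problem rather than a one-sided relaxation, a point the paper's proof simply takes for granted (and note that the paper's intermediate inequality \eqn{ub2} carries a sign typo, a $+$ where the Joseph form requires $-$ on the $\K\left(\Rsens+\Rdata\right)\K^T$ term, which your version implicitly corrects and which the final LMI confirms).
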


\begin{proof}
Recalling that the posterior variance in Kalman filtering is also given by
$
\Sigpp_{\vo{xx}} = (\I-\K\C)\Sigp_{\vo{xx}}(\I-\K\C)^T + \K\R\K^T,$
$\trace{\Sigpp_{\vo{x}_u\vo{x}_u,k+1}} \le \gamma_u$, is equivalent to
\begin{align}
& \vo{Q}_u - \vo{M}_u(\I-\K\C)\Sigp_{\vo{x}\vo{x},k+1}(\I-\K\C)^T\vo{M}^T_u + \vo{M}_u\K\left(\Rsens + \Rudata\right)\K^T\vo{M}^T_u \ge 0,
\eqnlabel{ub2}
\end{align}
along with $\trace{\vo{Q}_u} \leq \gamma_u$.

Using Schur complement we can write \eqn{ub2} as,
\begin{align*}
\begin{bmatrix}
\vo{Q}_u &\vo{M}_u(\I-\K\C)\sqrt{\Sigp_{\vo{x}\vo{x},k+1}} & \vo{M}_u\K & \vo{M}_u\K \\
\sqrt{\Sigp_{\vo{x}\vo{x},k+1}}(\I-\K\C)^T\vo{M}^T_u & \I & \vo{0} & \vo{0} \\
\K^T\vo{M}^T_u & \vo{0} & \left(\Rsens\right)^{-1} & \vo{0}\\
\K^T\vo{M}^T_u & \vo{0} & \vo{0} & \left(\Rudata\right)^{-1}
\end{bmatrix} \ge 0.
 \end{align*}

Introducing a new variable $\Sdata := \left(\Rudata\right)^{-1}$, we get the LMI in \eqn{xu:LMI1}, in terms of $\Sdata$. Maximization of $\trace{\Rudata}$ becomes minimization of  $\trace{\Sdata}$. 
\end{proof}
The above result simultaneously determines the Kalman gain $\K$ and the optimal noise in the data, for which upper bound on posterior variance is achieved.

\begin{remarks}
The discussion so far has been on adding maximum synthetic noise to existing data, for which utility is achieved. This is relevant in situations where the collected data has multiple use with different accuracy needs. Thus, it is meaningful to sense at the highest accuracy and then add synthetic noise depending on accuracy needs. However, in some applications, it may be economical to determine the sensing accuracy directly, since higher accuracy is associated with higher cost. Theorem \ref{thm:utility2} can be modified to determine the optimal sensing precision for which the prescribed accuracy in the state estimate is achieved. It is given by the following result.
\end{remarks}
\begin{corollary}
\label{thm:utility3}
The minimum sensing precision that satisfies $\trace{\Sigpp_{\vo{x}_u\vo{x}_u,k+1}} \le \gamma_u $, is given by the solution of the following optimization problem
\begin{subequations}
\begin{align}
\min_{\vo{\lambda} \ge 0,\vo{Q}_u \ge 0, \, \K} \|\vo{\lambda}\|_1,\eqnlabel{xu:cost2}
\end{align}
\text{subject to }
\begin{align}
&\begin{bmatrix}
\vo{Q}_u &\vo{M}_u(\I-\K\C)\sqrt{\Sigp_{\vo{x}\vo{x},k+1}} & \vo{M}_u\K\\
\sqrt{\Sigp_{\vo{x}\vo{x},k+1}}(\I-\K\C)^T\vo{M}^T_u & \I & \vo{0} \\
\K^T\vo{M}^T_u & \vo{0} & \vo{\mathcal{S}}^\text{sensor}\\
\end{bmatrix} \ge 0, \eqnlabel{xu:LMI2}\\
&\trace{\Q_u} \le \gamma_u. \eqnlabel{xu:trace}
\end{align}
\eqnlabel{sparse-util}
\end{subequations}
where 
$$
\vo{\mathcal{S}}^\text{sensor} := \textbf{diag}(\vo{\lambda}),
$$ 
and $\vo{\lambda}$ is the sensor precision  which is the reciprocal of the sensor noise.
\end{corollary}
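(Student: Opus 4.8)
The plan is to follow the template of Theorem~\ref{thm:utility2} almost verbatim, but to reinterpret the measurement-noise covariance as the design object itself rather than as a fixed sensor term plus a synthetic correction. Concretely, I would start from the Joseph-form expression for the posterior variance under the linear measurement model $\H(\X_k) = \C\X_k$,
\begin{align*}
\Sigpp_{\vo{xx},k+1} = (\I-\K\C)\Sigp_{\vo{xx},k+1}(\I-\K\C)^T + \K\R\K^T,
\end{align*}
which holds for an arbitrary gain $\K$ and is minimized at the Kalman gain. Here, in contrast to Theorem~\ref{thm:utility2}, there is a single measurement-noise term $\R$ whose inverse is exactly the quantity to be designed; I would set $\vo{\mathcal{S}}^\text{sensor} := \R^{-1}$ and restrict it to be diagonal, $\vo{\mathcal{S}}^\text{sensor} = \textbf{diag}(\vo{\lambda})$, reflecting that sensor noise is uncorrelated.

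Next I would introduce a slack matrix $\vo{Q}_u \ge \Sigpp_{\vo{x}_u\vo{x}_u,k+1} = \vo{M}_u\Sigpp_{\vo{xx},k+1}\vo{M}^T_u$, so that the utility requirement $\trace{\Sigpp_{\vo{x}_u\vo{x}_u,k+1}} \le \gamma_u$ is enforced by the linear constraint $\trace{\vo{Q}_u} \le \gamma_u$ in \eqn{xu:trace}, since the semidefinite ordering gives $\trace{\Sigpp_{\vo{x}_u\vo{x}_u,k+1}} \le \trace{\vo{Q}_u}$. The matrix inequality $\vo{Q}_u - \vo{M}_u(\I-\K\C)\Sigp_{\vo{xx},k+1}(\I-\K\C)^T\vo{M}^T_u - \vo{M}_u\K\R\K^T\vo{M}^T_u \ge 0$ would then be converted to the LMI \eqn{xu:LMI2} by two Schur-complement steps: the $(\I-\K\C)$ term factors through $\sqrt{\Sigp_{\vo{xx},k+1}}$ and contributes the identity block $\I$, while $\vo{M}_u\K\R\K^T\vo{M}^T_u = \vo{M}_u\K(\vo{\mathcal{S}}^\text{sensor})^{-1}\K^T\vo{M}^T_u$ contributes the $\vo{\mathcal{S}}^\text{sensor}$ block in the lower-right corner. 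This collapses the two noise blocks of Theorem~\ref{thm:utility2} into the single $\vo{\mathcal{S}}^\text{sensor}$ block of the corollary.

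For the objective, I would observe that because $\vo{\lambda} \ge 0$ we have $\|\vo{\lambda}\|_1 = \sum_i \lambda_i = \trace{\vo{\mathcal{S}}^\text{sensor}}$, so minimizing $\|\vo{\lambda}\|_1$ minimizes the total sensor precision, i.e. maximizes the tolerable noise, which is the cheapest sensing consistent with the utility bound; the $\ell_1$ penalty additionally drives individual $\lambda_i$ to zero, identifying sensors that can be dropped. All constraints are linear in the decision variables $(\vo{\lambda}, \vo{Q}_u, \K)$, so the problem is a convex SDP. The point requiring the most care is the joint role of $\K$: it is left free, and I would argue that at the optimum it coincides with the Kalman gain, since any suboptimal gain only enlarges the achievable posterior variance and thus forces a larger precision to satisfy $\trace{\vo{Q}_u} \le \gamma_u$. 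Consequently, co-optimizing $\K$ with $\vo{\lambda}$ delivers the genuine minimum-precision solution rather than a conservative bound.
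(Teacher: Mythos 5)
Your proposal is correct and follows essentially the same route as the paper: the paper's own proof is a one-line remark that the argument of Theorem~\ref{thm:utility2} carries over with the sensor precision treated as a diagonal decision variable, and your derivation (Joseph-form posterior variance for arbitrary gain $\K$, slack matrix $\vo{Q}_u$ with $\trace{\Q_u}\le\gamma_u$, Schur complements through $\sqrt{\Sigp_{\vo{x}\vo{x},k+1}}$ and $\vo{\mathcal{S}}^\text{sensor}=\textbf{diag}(\vo{\lambda})=\R^{-1}$, and $\|\vo{\lambda}\|_1=\trace{\vo{\mathcal{S}}^\text{sensor}}$ for $\vo{\lambda}\ge 0$) is exactly that adaptation, with the two noise blocks of \eqn{xu:LMI1} merged into the single precision block of \eqn{xu:LMI2}. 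Your added observation that co-optimizing $\K$ recovers the Kalman gain, so the formulation is not conservative, is a correct and useful detail the paper leaves implicit.
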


\begin{proof}
The proof is similar to theorem \ref{thm:utility2}, with the sensor precision as a variable with a diagonal structure. 
\end{proof}

\begin{remarks}[Sparse Sensing:]
The $l_1$ cost in \eqn{xu:cost2} induces sparseness in the solution, i.e. many entries of the optimal $\vo{\lambda}$ are expected to be zero. These correspond to zero precision, implying that the corresponding sensor is not required to achieve the required accuracy in the state estimate, and can be eliminated from further consideration. From a system design perspective, this is quite useful. We can formulate the optimization problem with a dictionary of sensors, admitting redundancy in the sensing. The $l_1$ optimization will result in the optimal (possibly sparse) sensing precisions that will achieve the required accuracy in the state estimate.
\end{remarks}

\begin{remarks}
It is possible that the user specifies multiple partitions of the augmented state $\X$ for utility, i.e. $\X_{u_1} := \vo{M}_{u_1}\X, \dots, \X_{u_r}:= \vo{M}_{u_r}\X$, with accuracy bounds
\begin{align*}
\trace{\vo{M}_{u_1}\Sigpp\vo{M}^T_{u_1}} \leq \gamma_{u_1}, \,\cdots,\, \trace{\vo{M}_{u_r}\Sigpp\vo{M}^T_{u_r}} \leq \gamma_{u_r}.
\end{align*}

In such a case, each upper-bound constraint will add a pair of inequalities to the optimization problem, similar to \eqn{xu:LMI2} and \eqn{xu:trace}, but in terms of variable $\Q_{u_i}$. 

\end{remarks}


\subsection{Minimum Noise Satisfying Privacy Constraints}
We next present a convex optimization problem, which determines the \textit{minimum} synthetic noise needed to satisfy the lower bound on  $\Sigpp_{\vo{x}_p\vo{x}_p,k+1}$. It is given by the following theorem.\\

\begin{theorem}\label{thm:privacy}
The minimum noise that satisfies $\trace{\Sigpp_{\vo{x}_p\vo{x}_p,k+1}} \ge \gamma_p$, is given by the solution of the following optimization problem
\begin{subequations}
\begin{align}
\min_{\Rpdata \ge 0,\, \Q_p \ge 0} \trace{\Rpdata},\eqnlabel{xp:cost}
\end{align}
such that,
\begin{align}
&\begin{bmatrix}
\vo{M}_{p}\Sigp_{\vo{x}\vo{x},k+1}\vo{M}_{p}^{T}-\Q_p & \vo{M}_{p}\Sigp_{\vo{x}\vo{y},k+1}\\
{{\Sigp}_{\vo{x}\vo{y},k+1}^T}\vo{M}_{p}^{T} & \left(\Sigp_{\vo{yy},k+1}+\Rsens + \Rpdata\right)
\end{bmatrix}  \ge 0, \eqnlabel{xp:LMI}\\
&\trace{\Q_p} \geq \gamma_p.\eqnlabel{xp:trace}
\end{align}\eqnlabel{xp:opt}
\end{subequations} 
\end{theorem}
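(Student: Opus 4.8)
The plan is to mirror the proof of Theorem~\ref{thm:utility}, but with the matrix inequality oriented in the opposite direction, since a privacy specification is a \emph{lower} bound on the posterior covariance of $\vo{X}_p$ rather than an upper bound. The key device is an auxiliary positive semidefinite variable $\Q_p$ acting as a surrogate lower bound for $\Sigpp_{\vo{x}_p\vo{x}_p,k+1}$: I would impose the matrix inequality $\Q_p \le \Sigpp_{\vo{x}_p\vo{x}_p,k+1}$, so that by monotonicity of the trace under the positive semidefinite order, the linear constraint $\trace{\Q_p} \ge \gamma_p$ in \eqn{xp:trace} forces $\trace{\Sigpp_{\vo{x}_p\vo{x}_p,k+1}} \ge \gamma_p$, which is exactly the required privacy guarantee in \eqn{util-priv}.

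First I would substitute the definition of $\Sigpp_{\vo{x}_p\vo{x}_p,k+1}$ from \eqn{xp:err} into the surrogate inequality $\Sigpp_{\vo{x}_p\vo{x}_p,k+1} - \Q_p \ge 0$, obtaining
\[
\vo{M}_p\Sigp_{\vo{x}\vo{x},k+1}\vo{M}^T_p - \Q_p - \vo{M}_p\Sigp_{\vo{x}\vo{y},k+1}\left(\Sigp_{\vo{yy},k+1}+\Rsens + \Rpdata\right)^{-1}{{\Sigp}_{\vo{x}\vo{y},k+1}^T}\vo{M}^T_p \ge 0.
\]
Because $\Rsens$ is diagonal with positive entries and $\Sigp_{\vo{yy},k+1}\ge 0$, $\Rpdata \ge 0$, the block $\Sigp_{\vo{yy},k+1}+\Rsens + \Rpdata$ is positive definite and invertible, so I can take its Schur complement to arrive directly at the LMI \eqn{xp:LMI}. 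The essential simplification relative to Theorem~\ref{thm:utility} is that no matrix inversion lemma is needed here: $\Rpdata$ already enters the $(2,2)$ block affinely, and because the objective \eqn{xp:cost} minimizes $\trace{\Rpdata}$ itself rather than $\trace{(\Rpdata)^{-1}}$, the constraint is jointly affine in $(\Q_p,\Rpdata)$ and the program \eqn{xp:opt} is a convex semidefinite program.

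The step requiring the most care is verifying that this relaxation is \emph{exact}, i.e. that \eqn{xp:opt} returns the same optimal noise as the original problem of minimizing $\trace{\Rpdata}$ subject to $\trace{\Sigpp_{\vo{x}_p\vo{x}_p,k+1}} \ge \gamma_p$. I would establish this by showing the two problems share the same feasible set of noise matrices. On one hand, any $(\Q_p,\Rpdata)$ feasible for \eqn{xp:opt} satisfies $\trace{\Sigpp_{\vo{x}_p\vo{x}_p,k+1}} \ge \trace{\Q_p} \ge \gamma_p$, so $\Rpdata$ is feasible for the original problem. On the other hand, for any $\Rpdata$ with $\trace{\Sigpp_{\vo{x}_p\vo{x}_p,k+1}} \ge \gamma_p$, the choice $\Q_p := \Sigpp_{\vo{x}_p\vo{x}_p,k+1} \ge 0$ satisfies $\Q_p \le \Sigpp_{\vo{x}_p\vo{x}_p,k+1}$ (with equality) and $\trace{\Q_p} \ge \gamma_p$, so $(\Q_p,\Rpdata)$ is feasible for \eqn{xp:opt} at the same objective value. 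The feasible noise sets and objectives therefore coincide, so the minimizers coincide, which proves the claim.
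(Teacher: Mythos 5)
Your proof is correct and follows essentially the same route as the paper's: introduce the auxiliary variable $\Q_p$ as a surrogate lower bound, substitute \eqn{xp:err} into $\Sigpp_{\vo{x}_p\vo{x}_p,k+1} - \Q_p \ge 0$, and convert the resulting inequality into the LMI \eqn{xp:LMI} via the Schur complement, which is valid because $\Sigp_{\vo{yy},k+1}+\Rsens+\Rpdata$ is positive definite. Your explicit verification that the relaxation is exact (taking $\Q_p := \Sigpp_{\vo{x}_p\vo{x}_p,k+1}$ for any feasible noise) is a detail the paper compresses into the phrase ``equivalent conditions,'' so you have supplied slightly more justification than the published proof along the same path.
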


\begin{proof}
From \eqn{xp:err}, and $\trace{\Sigpp_{\vo{x}_p\vo{x}_p,k+1}} \ge \gamma_p$ we get the following equivalent conditions
\begin{align}
&\vo{M}_p\Sigp_{\vo{x}\vo{x},k+1}\vo{M}^T_p - \Q_p - \vo{M}_p\Sigp_{\vo{x}\vo{y},k+1}\left(\Sigp_{\vo{yy},k+1}+\Rsens + \Rpdata\right)^{-1}{{\Sigp}_{\vo{x}\vo{y},k+1}^T}\vo{M}^T_p \ge 0, \eqnlabel{xp:LMI1}\\
& \trace{\Q_p} \geq \gamma_p.
\end{align}
Using Schur complement, we can write inequality in \eqn{xp:LMI1} as the LMI in \eqn{xp:LMI} with respect to $\Rpdata$.
\end{proof}

\begin{remarks}
Like in the utility case, it is possible that the user specifies multiple partitions of the augmented state $\X$ for privacy, i.e. $\X_{p_1} := \vo{M}_{p_1}\X, \dots, \X_{p_q}:= \vo{M}_{p_q}\X$, with privacy bounds
\begin{align*}
\trace{\vo{M}_{p_1}\Sigpp\vo{M}^T_{p_1}} \geq \gamma_{p_1}, \,\dots,\, \trace{\vo{M}_{p_q}\Sigpp\vo{M}^T_{p_q}} \geq \gamma_{p_q}.
\end{align*}
In such a case, each lower-bound constraint will add a pair of inequalities to the optimization problem, similar to \eqn{xp:LMI} and \eqn{xp:trace}, but in terms of variable $\Q_{p_i}$. 
\end{remarks}

\subsection{Optimal Privacy-Utility Tradeoff}
The optimization problems in the previous two sections have addresses utility and privacy separately. In this section, we present a joint formulation for determining the optimal privacy-utility trade-off. We formulate the optimization problems around \textit{two notions of the trade-off}. 
\subsubsection{Utility-aware privacy}
The first notion is \textit{utility-aware privacy}, where the utility is specified via hard constraint $\gamma_u$ and the privacy is maximized. This results in the following optimization formulation:
This results in the following formulation:
\begin{align}
\max\; \gamma_p, \;\text{subject to } \trace{\Sigpp_{\vo{x}_p\vo{x}_p,k+1}} \geq \gamma_p, \text{ and }
\trace{\Sigpp_{\vo{x}_u\vo{x}_u,k+1}} \leq \gamma_u,
\eqnlabel{utility-aware privacy}
\end{align}
for a given $\gamma_u$. This can be generalized to multiple partitions of $\X$ for privacy and utility.
\subsubsection{Privacy-aware utility}
The second notion is \textit{privacy-aware utility}, where the privacy is specified via hard constraint $\gamma_p$ and the utility is maximized. This results in the following formulation:
\begin{align}
\min\; \gamma_u, \;\text{subject to } \trace{\Sigpp_{\vo{x}_p\vo{x}_p,k+1}} \geq \gamma_p, \text{ and }
\trace{\Sigpp_{\vo{x}_u\vo{x}_u,k+1}} \leq \gamma_u,
\eqnlabel{utility-aware privacy}
\end{align}
for a given $\gamma_p$. This can also be generalized to multiple partitions of $\X$ for both privacy and utility.

The idea is to combine the optimization formulations from theorem \ref{thm:utility} (or \ref{thm:utility2})  and theorem \ref{thm:privacy} into a single formulation. However, theorems \ref{thm:utility} (or \ref{thm:utility2}) and \ref{thm:privacy}
involve variables  $\Sdata$  and $\Rpdata$ that are constrained by $\Sdata\Rpdata = \I_{n_y}$. which is nonconvex. In this paper, we linearize this constraint about a known value of  $\Sdata$ and $\Rpdata$, and iteratively update it to arrive at a suboptimal solution. That is, initially we assume $\Sbar$ and $\Rbar$ are given and we write, 
\begin{align}
\label{eq:updateSR1}
\Sdata &:= \Sbar + \Stil \ge 0, \text{ and } \\
\label{eq:updateSR2}
\Rdata &:= \Rbar + \Rtil \ge 0,
\end{align}
and linearize $\Sdata\Rpdata = \I_{n_y}$ about $\Sbar$ and $\Rbar$, to get
\begin{align}
\Stil\Rbar+\Sbar\Rtil = 0,
\end{align}
and solve for $\Stil$ and $\Rtil$. 

We next present the complete optimization formulation for \textit{utility-aware privacy}, formulated assuming linear sensing model. It can be generalized to nonlinear sensing model by formulating it around theorems \ref{thm:utility} and \ref{thm:privacy}.

\begin{theorem}\label{thm:utility-aware}
The data noise that satisfies the utility-constraint $\trace{\Sigpp_{\vo{x}_u\vo{x}_u,k+1}} \leq \gamma_u$ for a given $\gamma_u$, and maximizes privacy, is given by the solution of the following optimization problem:
\begin{subequations}
\begin{align}
\max_{\Q_p,\,\Q_u,\,\Stil,\,\Rtil,\,\K,\, \gamma_p} \gamma_p
\eqnlabel{cost:util-aware}
\end{align}
subject to
\begin{align}
&\begin{bmatrix}
\vo{Q}_u &\vo{M}_u(\I-\K\C)\sqrt{\Sigp_{\vo{x}\vo{x},k+1}} & \vo{M}_u\K & \vo{M}_u\K \\
\sqrt{\Sigp_{\vo{x}\vo{x},k+1}}(\I-\K\C)^T\vo{M}^T_u & \I & \vo{0} & \vo{0} \\
\K^T\vo{M}^T_u & \vo{0} & \left(\Rsens\right)^{-1} & \vo{0}\\
\K^T\vo{M}^T_u & \vo{0} & \vo{0} & \Sdata
\end{bmatrix} \ge 0,\\[2mm]
&\begin{bmatrix}
\vo{M}_{p}\Sigp_{\vo{x}\vo{x},k+1}\vo{M}_{p}^{T}-\Q_p & \vo{M}_{p}\Sigp_{\vo{xx},k+1}\C^T\\
\C{\Sigp}_{\vo{xx},k+1}\vo{M}_{p}^{T} & \left(\C\Sigp_{\vo{xx},k+1}\C^T+\Rsens + \Rpdata\right)
\end{bmatrix}  \ge 0, \\[2mm]
&\trace{\Q_u} \le \gamma_u,\\
&\trace{\Q_p} \geq \gamma_p,\\
& \Sdata := \Sbar + \Stil \ge 0,\\
& \Rdata := \Rbar + \Rtil \ge 0,\\
& \Stil\Rbar+\Sbar\Rtil = 0.
\end{align}
\eqnlabel{optim:util-aware}
\end{subequations}
\end{theorem}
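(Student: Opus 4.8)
The plan is to assemble the claimed formulation by reusing, essentially verbatim, the two building blocks already established—the utility certificate of Theorem~\ref{thm:utility2} and the privacy certificate of Theorem~\ref{thm:privacy}—and then to resolve the single coupling between them that makes a naive combination nonconvex. Nothing about the individual linear matrix inequalities is new; the only genuine content is how the shared synthetic noise is represented consistently in both.

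First I would invoke Theorem~\ref{thm:utility2}: under the linear sensing model, the utility requirement $\trace{\Sigpp_{\vo{x}_u\vo{x}_u,k+1}} \le \gamma_u$ is equivalent to the first block LMI appearing in \eqn{optim:util-aware} together with $\trace{\vo{Q}_u} \le \gamma_u$, where the synthetic-noise precision $\Sdata := (\Rdata)^{-1}$ is left as a free matrix variable in the bottom-right block. Next I would invoke Theorem~\ref{thm:privacy}, specialized to the linear model through $\Sigp_{\vo{x}\vo{y},k+1} = \Sigp_{\vo{x}\vo{x},k+1}\C^T$ and $\Sigp_{\vo{y}\vo{y},k+1} = \C\Sigp_{\vo{x}\vo{x},k+1}\C^T$: the privacy requirement $\trace{\Sigpp_{\vo{x}_p\vo{x}_p,k+1}} \ge \gamma_p$ is equivalent to the second block LMI together with $\trace{\vo{Q}_p} \ge \gamma_p$, now written directly in the covariance $\Rdata$. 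Since $\gamma_p$ enters the feasibility conditions only through this last trace inequality, maximizing the guaranteed privacy is the same as maximizing $\gamma_p$, which yields the objective \eqn{cost:util-aware}.

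The essential difficulty—and the only step that is not a mechanical Schur-complement rewrite inherited from the cited results—is that the utility LMI is expressed in $\Sdata$ while the privacy LMI is expressed in $\Rdata$, yet both must denote the same injected noise; consistency therefore forces the bilinear equality $\Sdata\Rdata = \I$, which is nonconvex. I would handle it exactly as announced in the text: fix a nominal pair $(\Sbar,\Rbar)$ satisfying $\Sbar\Rbar = \I$, write $\Sdata = \Sbar + \Stil$ and $\Rdata = \Rbar + \Rtil$, substitute into $\Sdata\Rdata = \I$, and discard the second-order term $\Stil\Rtil$. Because the zeroth-order term $\Sbar\Rbar$ cancels against $\I$, what remains is the linear constraint $\Stil\Rbar + \Sbar\Rtil = 0$. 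With this substitution every surviving condition is an LMI or a linear (in)equality in the decision variables $\vo{Q}_u,\vo{Q}_p,\Stil,\Rtil,\K,\gamma_p$, and the objective is linear, so for fixed $(\Sbar,\Rbar)$ the problem is a semidefinite program, establishing the stated formulation.

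I would close by recording what the linearization costs: dropping $\Stil\Rtil$ means the returned pair only approximately satisfies $\Sdata\Rdata = \I$, so the SDP solution is suboptimal, and one re-centers $(\Sbar,\Rbar)$ at the current iterate and re-solves until the pair stabilizes. The main obstacle I anticipate is precisely this nonconvex coupling—in particular, arguing that the iterates stay feasible in the sense that $\Sbar\Rbar \approx \I$ is maintained closely enough for the first-order cancellation to remain meaningful, and that the sequence of SDPs does not diverge—whereas the block LMIs and the reduction of the two trace conditions are routine given Theorems~\ref{thm:utility2} and~\ref{thm:privacy}.
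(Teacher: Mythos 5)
Your proposal is correct and follows essentially the same route as the paper: the paper's own proof is a one-sentence remark that the result ``builds on the proofs of Theorems \ref{thm:utility2} and \ref{thm:privacy}, along with linearization of the nonconvex constraint $\Sdata\Rdata = \I_{n_y}$,'' which is precisely the combination you carry out. Your write-up simply makes explicit what the paper leaves implicit---the specialization of the privacy LMI to the linear model via $\Sigp_{\vo{x}\vo{y},k+1} = \Sigp_{\vo{x}\vo{x},k+1}\C^T$, the cancellation of $\Sbar\Rbar$ against $\I$ in the linearization, and the resulting suboptimality addressed by iterative re-centering, which the paper defers to the discussion surrounding Algorithm \ref{alg:util-aware}.
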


\begin{proof}
Builds on the proof for theorems \ref{thm:utility2} and \ref{thm:privacy}, along with linearization of the nonconvex constraint $\Sdata\Rpdata = \I_{n_y}$.\\
\end{proof}

\begin{remarks}
\label{rem:optim-privacy}
The optimization for privacy-aware utility can be formulated by replacing \eqn{cost:util-aware} with
\begin{align}
\min_{\Q_p,\,\Q_u,\,\Stil,\,\Rtil,\,\K,\, \gamma_u} \gamma_u
\eqnlabel{cost:priv-aware}
\end{align}
where $\gamma_p$ is user specified.\\
\end{remarks}

\begin{remarks}
Optimization problem \eqn{optim:util-aware} can also be generalized to multiple partitions of $\X$ for both privacy and utility, by introducing new variables  $\{\Q_{p_i}\}$, $\{\Q_{u_j}\}$, and $\{\gamma_{p_i}\}$ or $\{\gamma_{u_j}\}$.
\end{remarks}

Optimization in \eqn{optim:util-aware}, is solved repeatedly with updated values of $\Sbar$ and $\Rbar$, until there is no significant change in the cost function. For the next iteration, $\Sbar$ and $\Rbar$ are updated with the optimal $\Stil$ and $\Rtil$. The update however, is slightly different for utility-aware privacy and privacy-aware utility. For utility-aware privacy, we must ensure utility constraints are satisfied. Since the optimization in \eqn{optim:util-aware} satisfies this constraint with $\Sdata := \Sbar + \Stil$, we update $\Sbar$ and $\Rbar$ as
\begin{subequations}
\begin{align}
\Sbar &:= \Sbar + \Stil, \text{ and }\\
\Rbar &:= \left(\Sbar\right)^{-1}.
\end{align}
\eqnlabel{update:utility}
\end{subequations}

For privacy-aware utility, we must ensure privacy constraints are satisfied. Since the optimization (see remark \ref{rem:optim-privacy}) satisfies this constraint with $\Rdata := \Rbar + \Rtil$, we update $\Sbar$ and $\Rbar$ for this case, as
\begin{subequations}
\begin{align}
\Rbar &:= \Rbar + \Rtil, \text{ and }\\
\Sbar &:= \left(\Rbar\right)^{-1}.
\end{align}
\eqnlabel{update:privacy}
\end{subequations}

Pseudocode for implementing the above iterative algorithm for utility-aware privacy is presented in Algorithm \ref{alg:util-aware}. The algorithm for privacy-aware utility can be similarly developed by incorporating the changes from remark \ref{rem:optim-privacy} and \eqn{update:privacy}, in Algorithm \ref{alg:util-aware}.

\begin{algorithm}
    \caption{Algorithm for {\bf utility-aware} privacy}
    \label{alg:util-aware}
    \begin{algorithmic}
    \item Initialize: $\Sbar := \I_{n_y}$, $\Rbar := \I_{n_y}$ \texttt{\# Any other initialization may work}\vspace{1mm}
    \item Initialize: $\gamma_{p_\text{old}} := 1e^{10}$ \texttt{\# Something large}\vspace{1mm}
    \item Initialize: $\epsilon := 10^{-3}$ \texttt{\# Tolerance}\vspace{1mm}
    \item Initialize: {\tt done} := {\bf \tt false}
    \item {\bf \tt while} {\tt done} == {\bf \tt false}  
	\item \hspace{1cm}Solve optimization problem in \eqn{optim:util-aware} to get $\gamma_p$, $\Stil$ and $\Rtil$
	\item \hspace{1cm}{\ttfamily \bfseries if} $|\gamma_p - \gamma_{p_\text{old}}| \le \epsilon$\\
		   \hspace{2cm} \texttt{done} := {\bf \tt true}\\
		  \hspace{1cm} {\ttfamily \bfseries else} \\ 
		  	\hspace{2cm} $\gamma_{p_\text{old}}:= \gamma_p$\vspace{1mm}\\
		  	\hspace{2cm} Update $\Sbar$ and $\Rbar$ using \eqn{update:utility}\\
			\hspace{1cm} {\ttfamily \bfseries end}\\
	{\ttfamily \bfseries end}
    \end{algorithmic}
\end{algorithm}

\section{Numerical Simulation}
In this section, we apply the proposed algorithms for tracking the International Space Station (ISS), with its orbit defined by the following TLE set:
\begin{verbatim}
ISS (ZARYA)
1 25544U 98067A 19248.67387091 0.00001921 00000-0 41082-4 0 9997
2 25544 51.6464 322.0340 0007976 9.5374 121.4565 15.50435809187740
\end{verbatim}

The orbit is propagated in Keplerian coordinates $(a,e,i,\Omega,\omega,f)$, with uncertain initial conditions and $J4$ perturbation. 
%
Initial condition uncertainty is assumed to be only in the semi-major axis ($a$). It is modeled as a Gaussian random variable with mean defined by the TLE set and standard-deviation equal to $1\%$ of the mean. This uncertainty is represented with $100$ samples.

Since the algorithms proposed here assume Gaussian uncertainty (EnKF and UKF), we transform the 6 dimensional orbit data from Keplerian coordinates $(a,e,i,\Omega,\omega,f)$ to Cartesian coordinates $(x,y,z,u,v,w)$, and investigate the privacy-utility tradeoff in this representation. From \fig{uncProp}, we see that $1\%$ uncertainty in $a$ causes significant increase in the state uncertainty after only one orbit, and therefore it provides a rich enough data set for investigating privacy-utility tradeoff. 

\begin{figure}[h!]
\begin{subfigure}{0.5\textwidth}
\includegraphics[width=\textwidth]{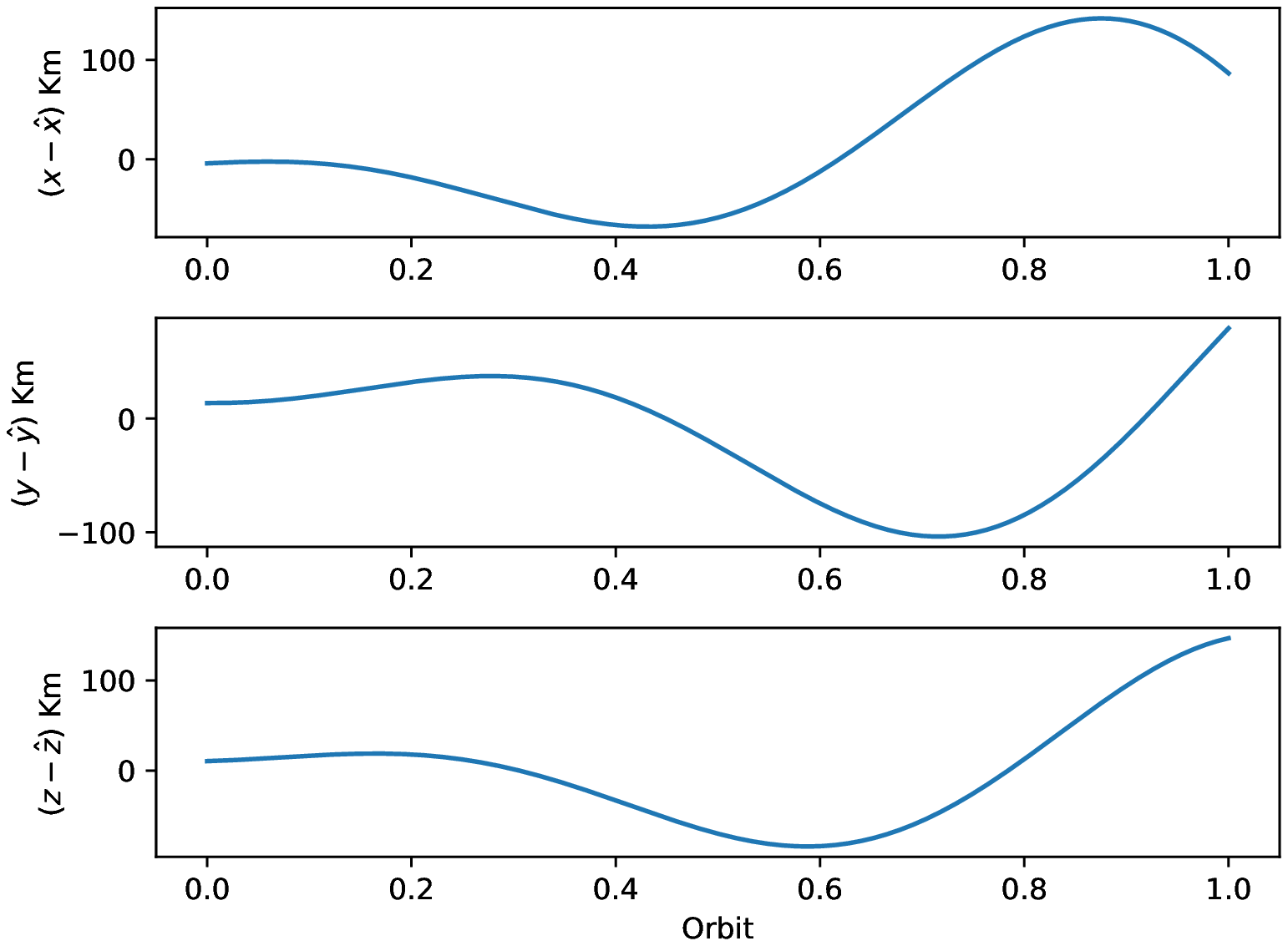}
\end{subfigure}
\begin{subfigure}{0.5\textwidth}
\includegraphics[width=\textwidth]{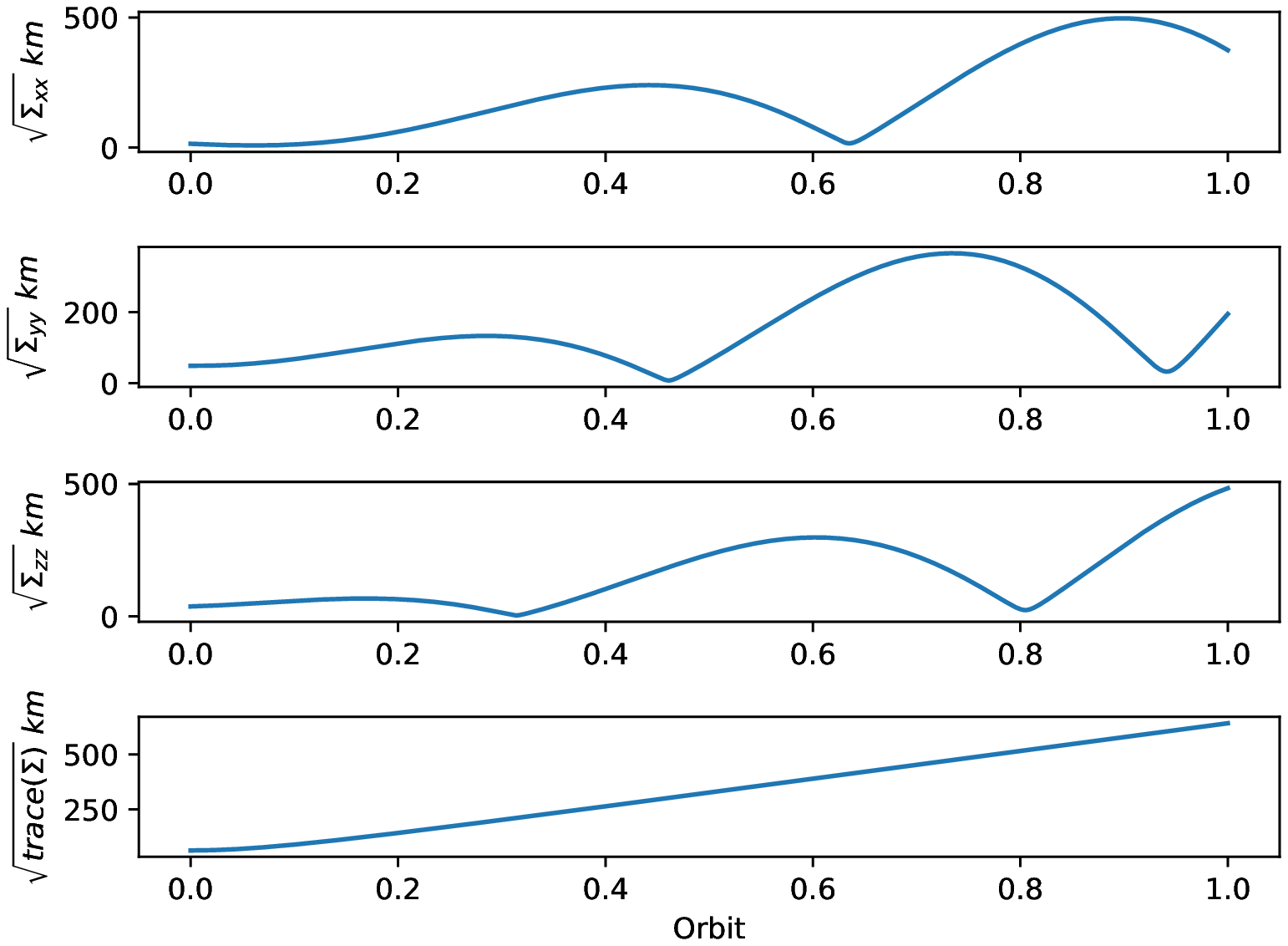}
\end{subfigure}
\caption{Growth in positional uncertainty due to $1\%$ uncertainty in the initial semi-major axis.}
\figlabel{uncProp}
\end{figure}

We also assume that we can sense $(x,y,z)$, which gives us a \textit{linear measurement model}. Consequently, we apply the corresponding algorithms to demonstrate the privacy-utility tradeoff. The orbit data is generated for $6000$ seconds, at 1 second intervals. This captures one orbit of the ISS. We extract $(x,y,z)$  at arbitrarily chosen times $0, 1600, 1900, 3400, 5100$ seconds, and treat them as measurements. These times are also expressed as $0, 0.27\,T_\text{orb}, 0.32\,T_\text{orb}, 0.57\,T_\text{orb}, 0.85\,T_\text{orb}$, where $T_\text{orb} := 6000$ seconds is the time for one orbit. The locations of these measurements, or the sensing sites, are shown in \fig{utility-orbit}. Using these five sensing sites, the objective of this example is to demonstrate:
\begin{enumerate}
\item the optimal sensor precisions that achieve the given utility in the state estimate,
\item minimum synthetic noise in the sensed data that achieves the prescribed privacy,
\item the optimal sensor precisions that achieve utility-aware privacy, i.e. maximize privacy with given utility constraints.
\end{enumerate}
These studies are performed with respect to the position of the ISS, and are presented next.

\begin{figure}[h!]
\begin{subfigure}{0.32\textwidth}
\includegraphics[width=\textwidth]{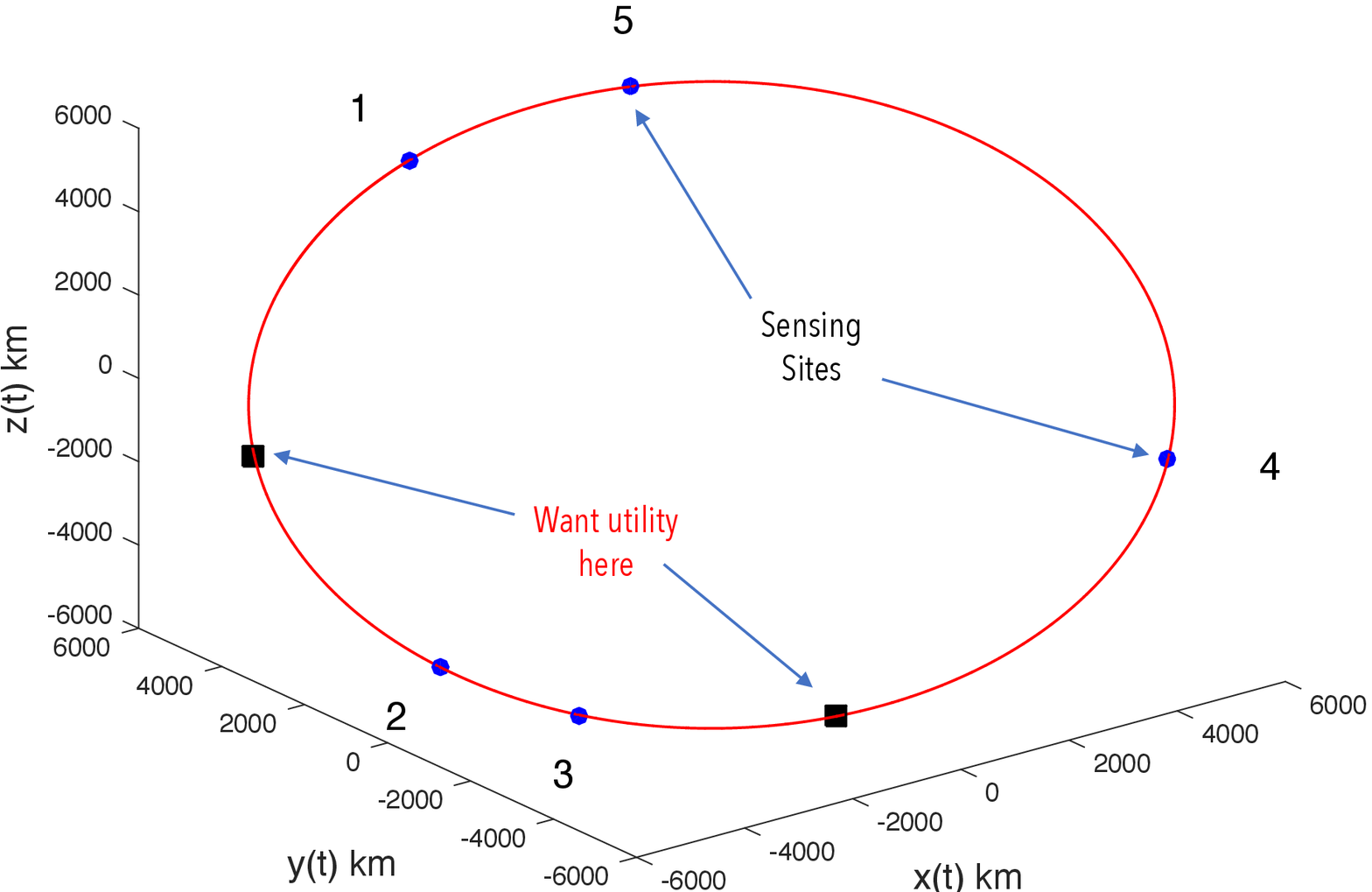}
\caption{Sensing sites and orbit locations where accurate state estimates are required.}
\figlabel{utility-orbit}
\end{subfigure}\hfill
\begin{subfigure}{0.32\textwidth}
\includegraphics[width=\textwidth]{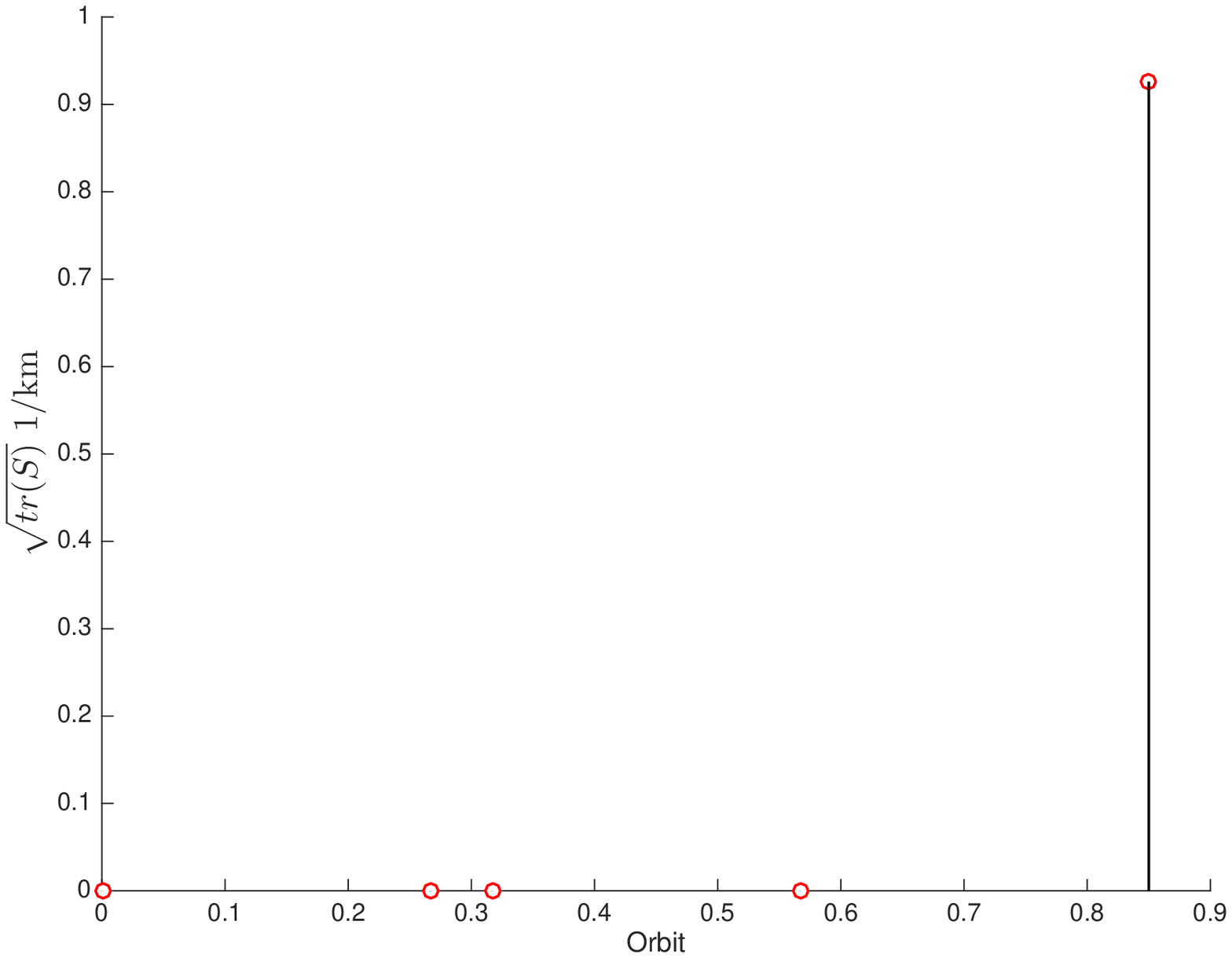}
\caption{Optimal sensor \underline{precisions}: $0, 0, 0,0,0.94$.}
\figlabel{utility-precision}
\end{subfigure}\hfill
\begin{subfigure}{0.32\textwidth}
\includegraphics[width=\textwidth]{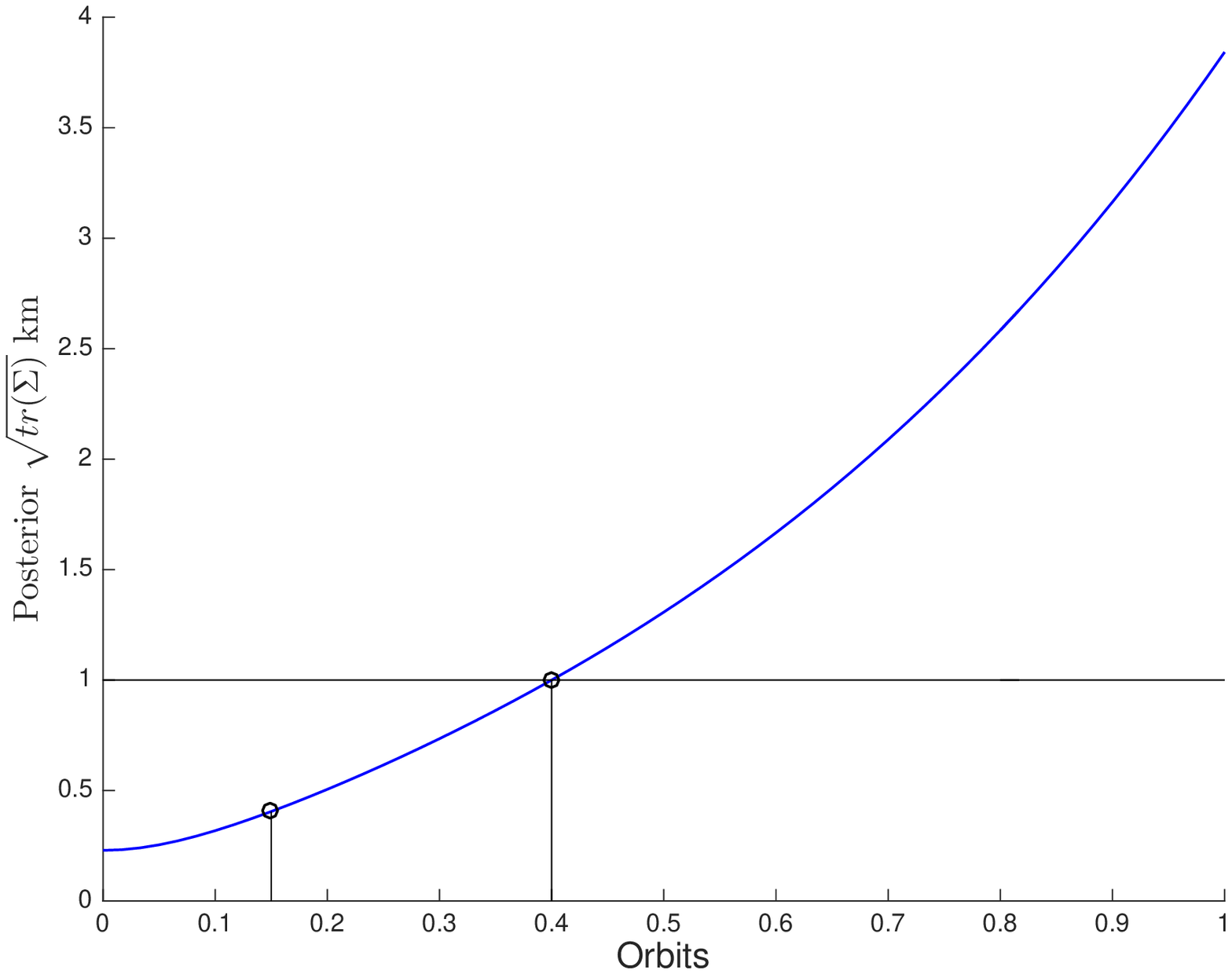}
\caption{Posterior variance satisfying the required estimation accuracy.}
\figlabel{utility-posterior}
\end{subfigure}
\caption{Optimal sensing precision satisfying utility constraints only.}
\figlabel{utility}
\end{figure}

\subsection{Minimum Precision Guaranteeing Required Utility}\label{sec:case1}
Here we determine the precisions of sensors for which the utility constraints are satisfied. This is achieved by applying the optimization problem formulation from corollary \ref{thm:utility3}.  \Fig{utility-orbit} shows the location of the sensing sites, along with the locations where the utility constraints are to be satisfied. These are imposed at times  $0.15\,T_\text{orb}$, and      $0.4\,T_\text{orb}$. Recall that utility constraints are defined by \eqn{util-priv}, and for this example we choose $\gamma_{u_i} := 1$ km, i.e. the utility constraints are $\trace{\vo{M}_{u_i}\Sigpp\vo{M}^T_{u_i}} \leq 1$. Matrices $\vo{M}_{u_i}$ are mask matrices, which extract the variance of the error in $(x,y,z)$ estimates, at times $0.15\,T_\text{orb}$ and  $0.4\,T_\text{orb}$ respectively. \Fig{utility-precision} shows the summation of the $(x,y,z)$ sensor precisions, at each of the five sensing sites, for which the utility constraints are satisfied. This is confirmed by \fig{utility-posterior}.

From \fig{utility-precision}, we see that minimization of the $l_1$ norm in \eqn{sparse-util} results in a sparse sensing architecture, i.e. many of the sensor precisions are zero. Therefore, only data from the sensing site \#5, with the indicated precision, is required to satisfy the utility constraint. There are three implications of this result. Firstly, from a sensing perspective, only sensors at site \#5 are sufficient to track the object at the specified locations with the required accuracy. This helps in sensor allocations for space object tracking. Secondly, the optimal sensing precision can be used to optimize the energy used in the radar/laser based sensing. Finally, from a data sharing perspective, if data is available from all the sensing sites -- only data from location \#5 needs to be shared. If the accuracy of the sensed data is more than required, it can be corrupted by noise defined by the reciprocal of the precision value. This will protect the economic value of the data \cite{friersoneconomic}. 

We also observe that data from the future is used to satisfy the utility constraints in the past, resulting in optimal smoothing. Since the optimization is formulated as a batch estimation, this is possible. The results are consistent with the fact that optimal smoothers generally achieve lower mean-square error than optimal filters \cite{anderson2012optimal}. This allows utility constraints in the past to be satisfied with least sensor precisions.

\subsection{Minimum Noise Guaranteeing Required Privacy}\label{sec:case2}
In this section, we apply the optimization problem from theorem \ref{thm:privacy} to determine the minimal noise in the sensor data for which the errors in the state estimates are above a prescribed value, at the specified location in the orbit. The location where privacy is required is shown in \fig{privacy-orbit}, which corresponds to time $0.82\,T_\text{orb}$. The privacy constraint is defined by \eqn{util-priv} with $\gamma_p :=  5.17^2$. This is determined by requiring $\trace{\vo{M}_{p}\Sigpp\vo{M}^T_p} \geq 10^{-4}\trace{\vo{M}_p\Sigp\vo{M}^T_p}$, where $\vo{M}_{p}$ extracts the variance of the error in $(x,y,z)$ estimates at time $0.82\,T_\text{orb}$. The sensing sites are the same as the above problem. 

\Fig{privacy-precision} shows the summation of the $(x,y,z)$ sensor noise variances, at each of the five sensing sites, for which the privacy constraints are satisfied. We see that the minimum noise, for which privacy is guaranteed, has an increasing trend, with higher noise in the vicinity of the location where privacy is required. \Fig{privacy-posterior} shows $\sqrt{\trace{\Sigpp}}$, which satisfies the privacy constraints at $t = 0.82\,T_\text{orb}$, i.e. $\sqrt{\trace{\vo{M}_{p}\Sigpp\vo{M}^T_p}} \ge 5.17$. Therefore, from a data-sharing perspective, sensor data from the 5 sites should be corrupted with synthetic noise -- defined by the optimal noise variances in \fig{privacy-precision}, to ensure the required privacy.

\begin{figure}[htb]
\begin{subfigure}{0.33\textwidth}
\includegraphics[width=\textwidth]{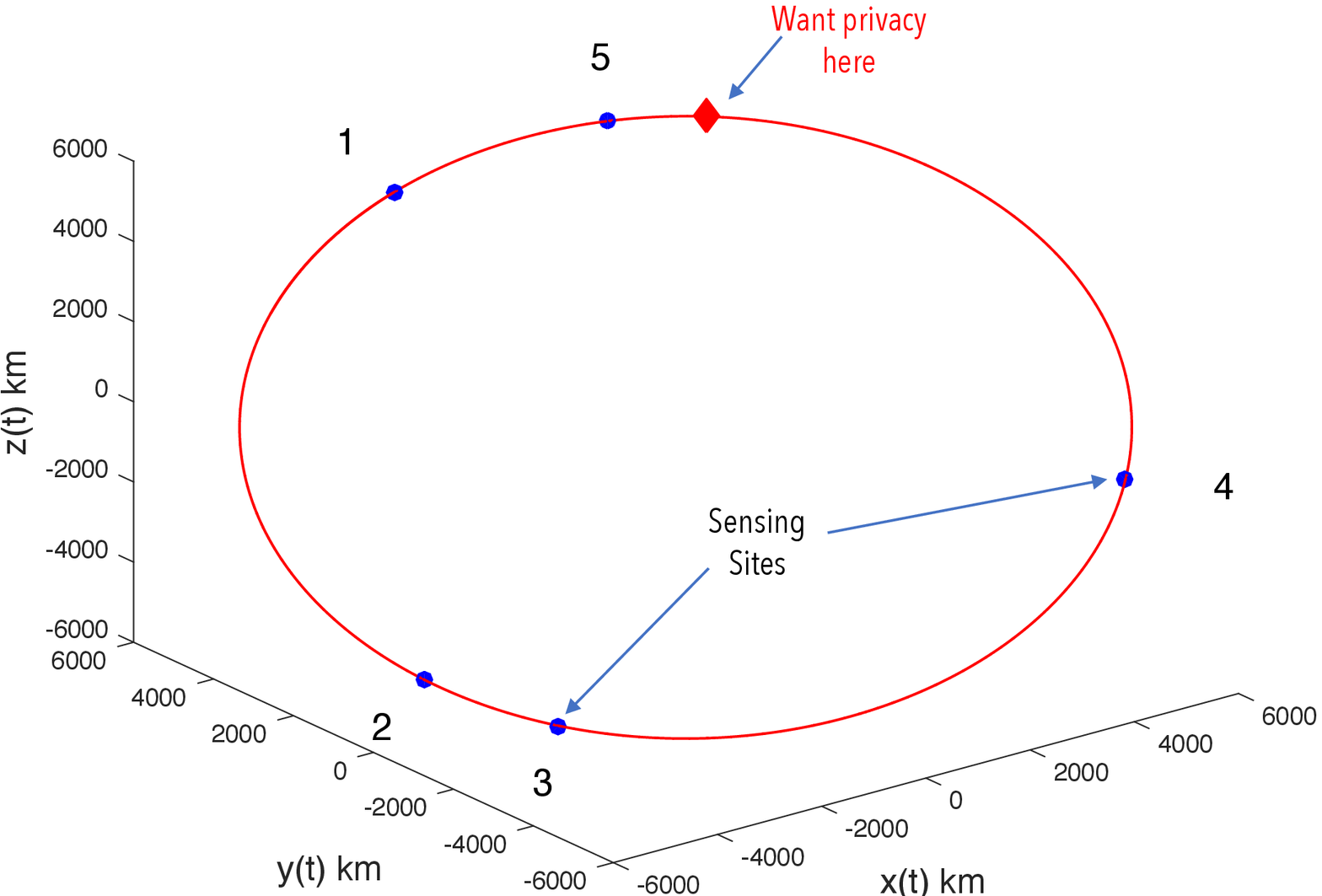}
\caption{Orbit location where privacy is required.}
\figlabel{privacy-orbit}
\end{subfigure}\hfill
\begin{subfigure}{0.33\textwidth}
\includegraphics[width=\textwidth]{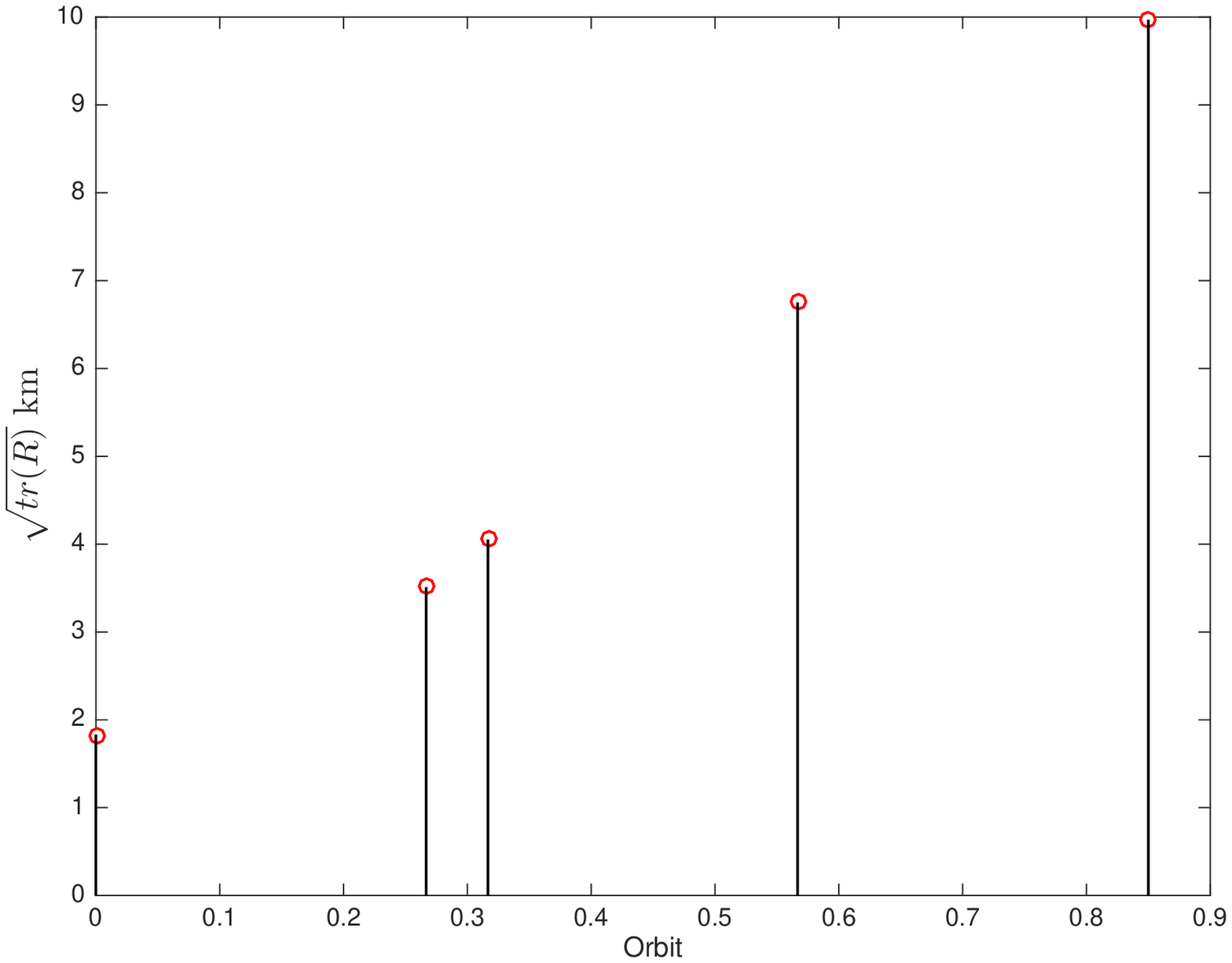}
\caption{Optimal sensor \underline{noise} with $l_1$ minimization.}
\figlabel{privacy-precision}
\end{subfigure}\hfill
\begin{subfigure}{0.33\textwidth}
\includegraphics[width=\textwidth]{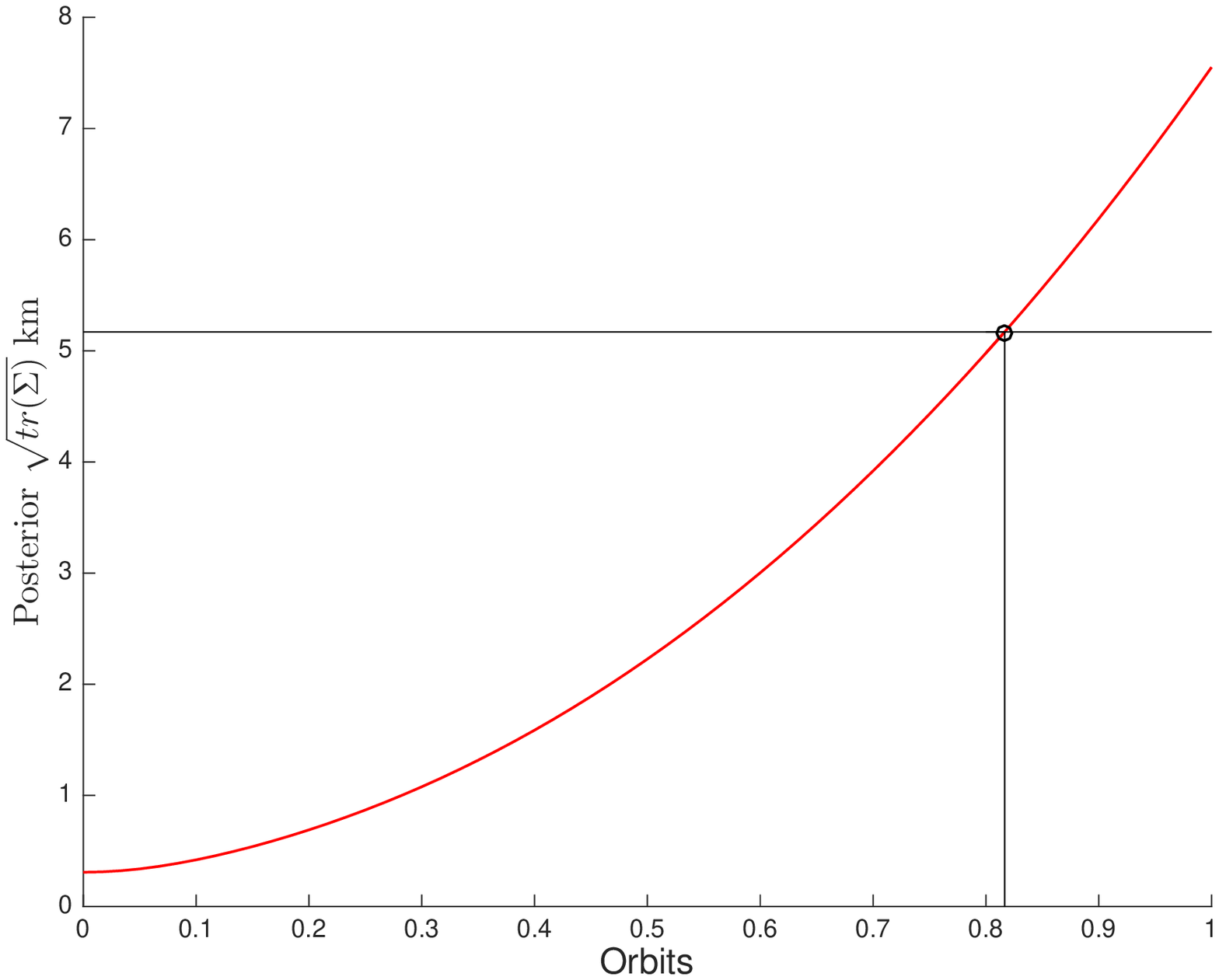}
\caption{Posterior variance satisfying the required privacy constraint.}
\figlabel{privacy-posterior}
\end{subfigure}
\caption{Optimal sensor noise for only privacy.}
\figlabel{privacy}
\end{figure}

\subsection{Utility-Aware Maximum Privacy}
We next present results obtained by applying Algorithm \ref{alg:util-aware} to the ISS data set, where privacy is maximized with utility constraints. The locations of sensors, the utility constraints, and the location where privacy is required, are the same as in the above studies. \Fig{up-precision} shows the sensor precisions for which the utility constraints (same as those in \S\ref{sec:case1}) are satisfied and privacy is maximized. The resulting posterior is shown in \fig{up-posterior}. We observe that utility constraints are satisfied at times  $0.15\,T_\text{orb}$, and $0.4\,T_\text{orb}$, while privacy is maximized at time $0.82\,T_\text{orb}$ . \Fig{up-posterior} also shows the  case when privacy is not maximized, but only utility is satisfied with minimum precision, i.e. the case discussed in \S\ref{sec:case1}. We see that utility-aware maximum privacy is able to achieve about $1.63$ times more privacy. The value of $\sqrt{\trace{\vo{M}_{p}\Sigpp\vo{M}^T_p}}$ in  \S\ref{sec:case1} is $2.26$, whereas $\sqrt{\trace{\vo{M}_{p}\Sigpp\vo{M}^T_p}}$ in this case is $4.35$. Thus, we can see the benefit Algorithm \ref{alg:util-aware}. 

\begin{figure}[htb]
\begin{subfigure}{0.32\textwidth}
\includegraphics[width=\textwidth]{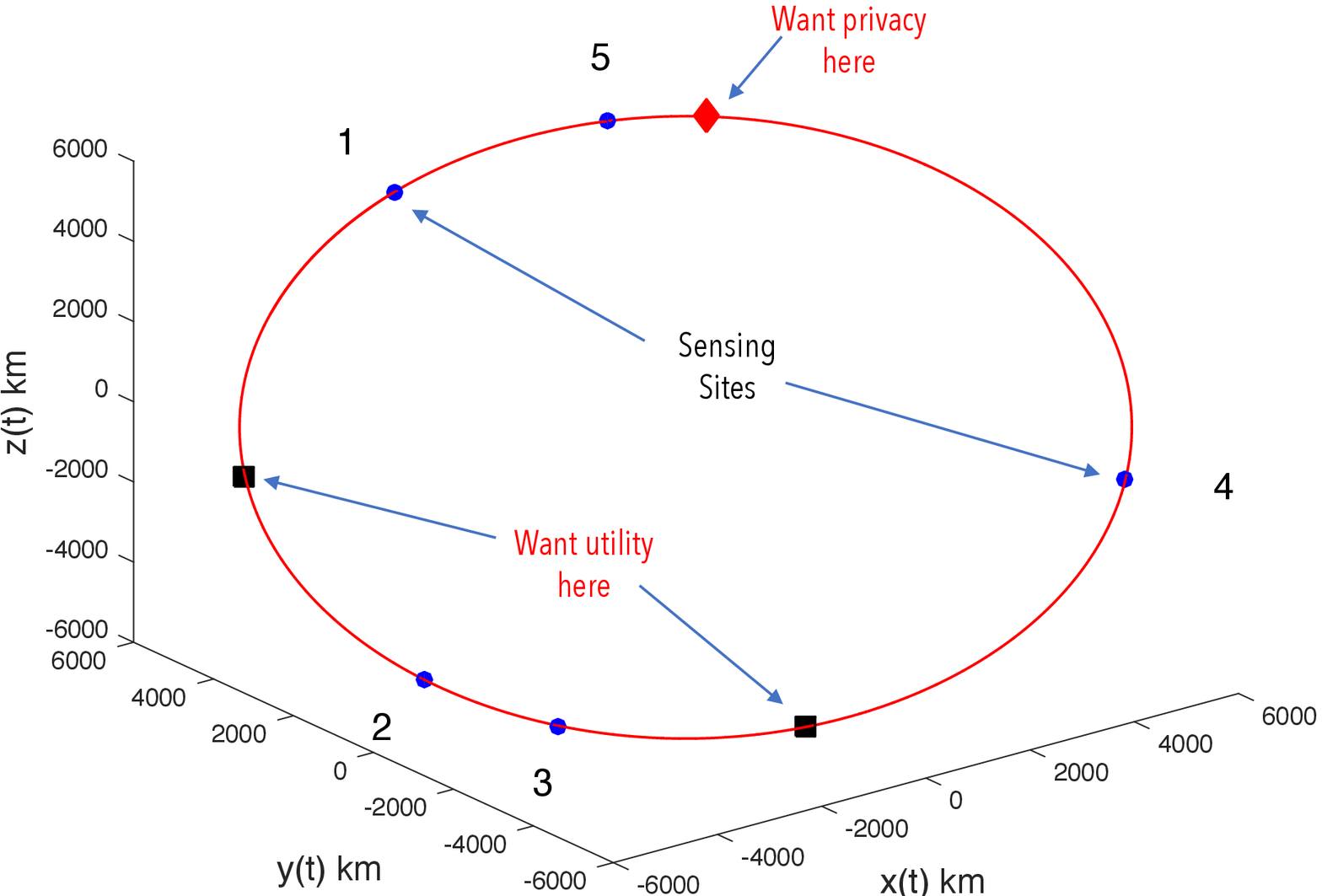}
\caption{Orbit locations where utility is constrained, and location where privacy is maximized.}
\figlabel{up-orbit}
\end{subfigure}\hfill
\begin{subfigure}{0.32\textwidth}
\includegraphics[width=\textwidth]{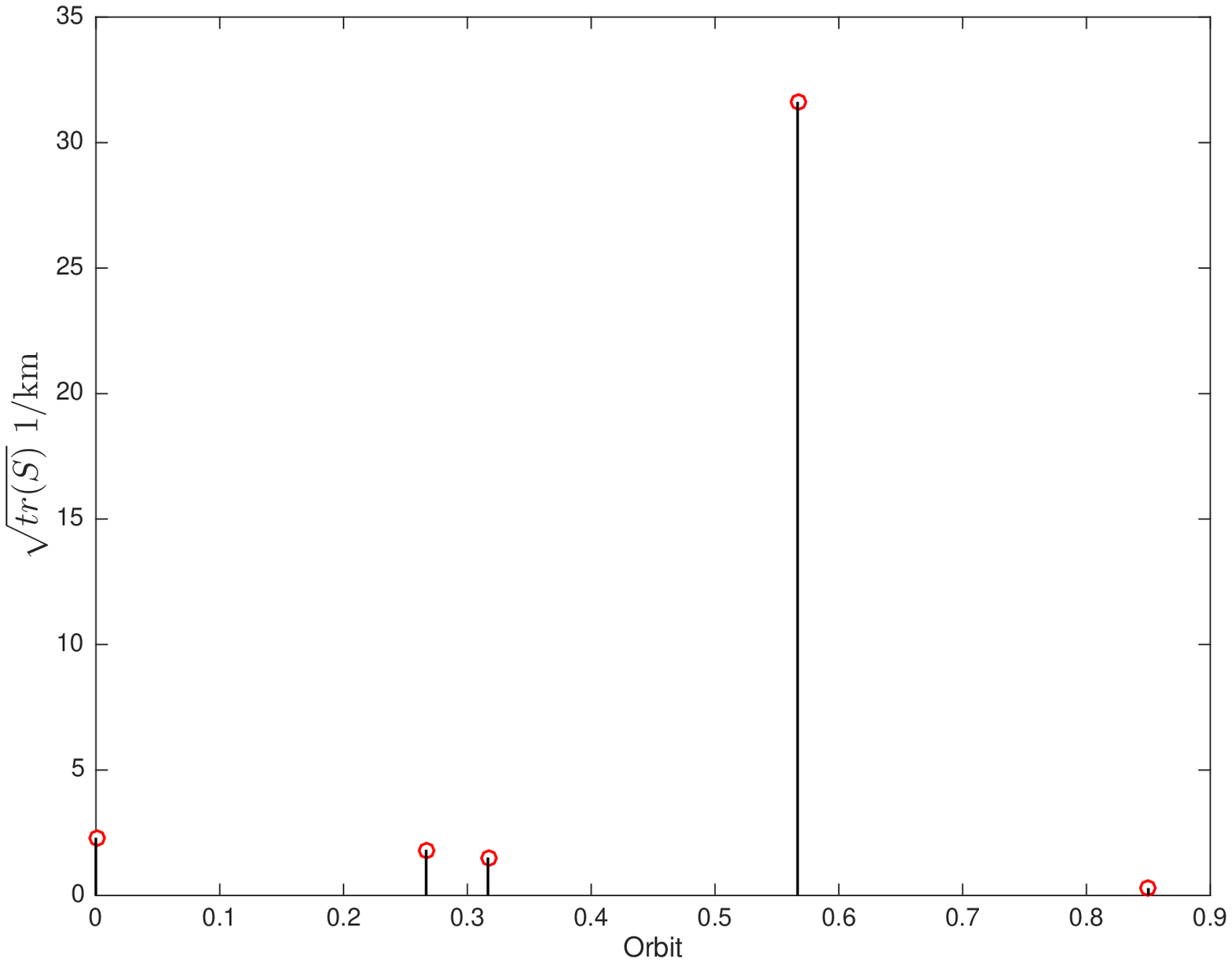}
\caption{Sensor \underline{precisions} for maximum utility-aware privacy: $2.30, 1.82, 1.51, 31.62, 0.29$.}
\figlabel{up-precision}
\end{subfigure}\hfill
\begin{subfigure}{0.32\textwidth}
\includegraphics[width=\textwidth]{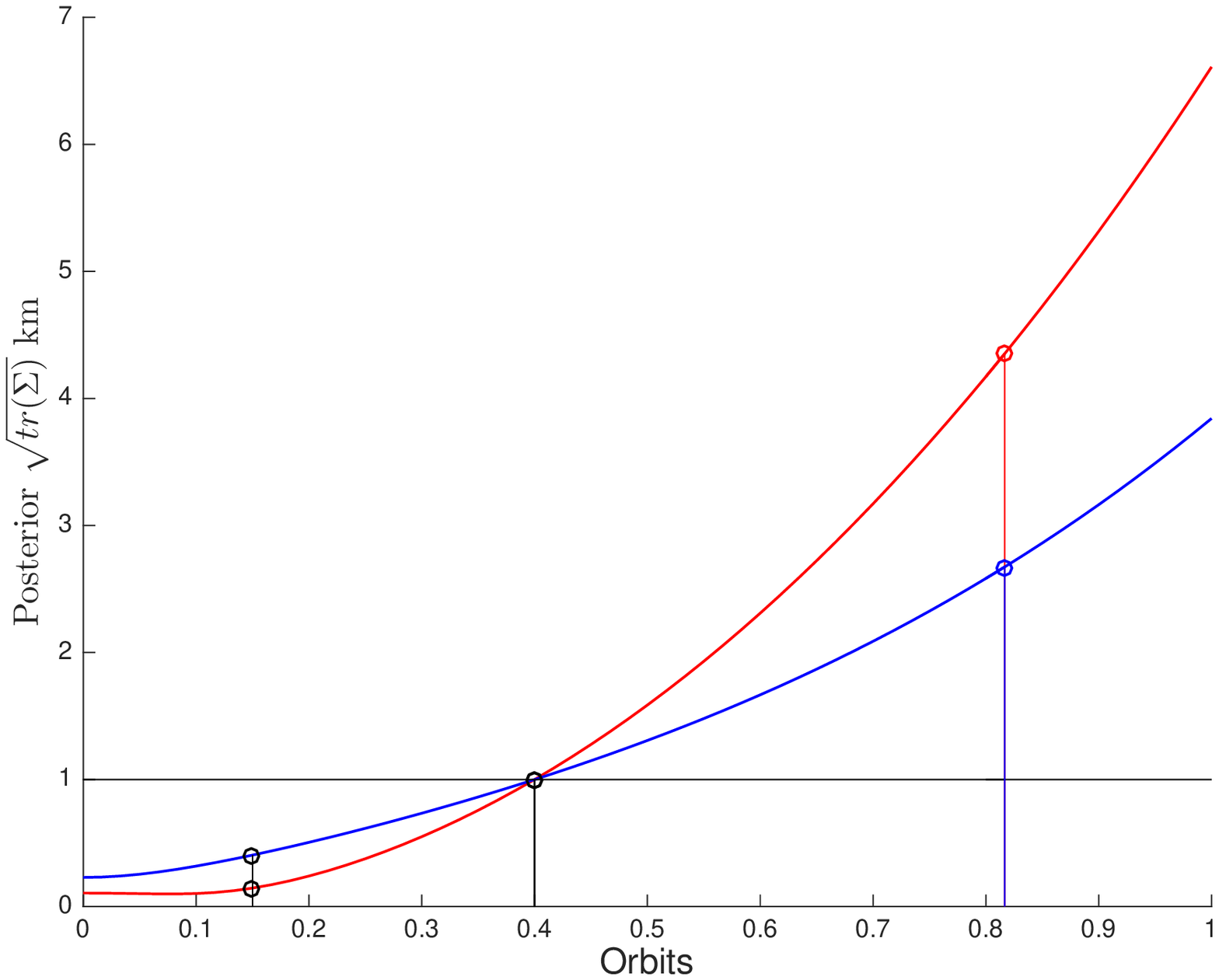}
\caption{Red: Posterior variance for maximum utility-aware privacy. Blue: \fig{utility-posterior} shown again.}
\figlabel{up-posterior}
\end{subfigure}
\caption{Optimal sensor precision for utility-aware privacy over one orbit of the ISS.}
\figlabel{up}
\end{figure}

\Fig{up-precision} shows that the utility constraints are satisfied using most precise data from the future (i.e. site \#4) , which implies that the optimization primarily applies smoothing to satisfy the utility constraints. This is similar to the results from \fig{utility-precision}, where smoothing was used to satisfy the utility constraints. 

We also see that privacy is maximized by reducing the precision at site \#5. Data from site \#5 is from the future, and incorporating this data would reduce uncertainty at all past locations due to smoothing. This would not maximize privacy. Low precision data from site \#5 essentially means that no update is made to the prior beyond the data from the sensing site \#4, and this maximizes the privacy. Consequently, data from site \#4 plays an important role. It is used to satisfy the uncertainty constraints at the utility sites using smoothing, and maximize uncertainty at the privacy site using prediction. 

\begin{remarks}
The examples above, are shown with data from one orbit, where privacy is maximized after the utility. This is a simpler scenario, because state uncertainty grows without update and hence it is easier to achieve higher uncertainty, and hence privacy, at later times.
\end{remarks}

In the next example, we consider an interesting scenario where privacy is maximized in between two locations where utility is constrained. For this scenario, we consider data from \textit{five} orbits of the ISS, with data saved every 100 seconds and measurements available at arbitrarily chosen  times $0$, $0.15\,T_\text{orb}$, $0.82\,T_\text{orb}$, $1.65\,T_\text{orb}$, $3.32\,T_\text{orb}$, $4.15\,T_\text{orb}$, and $4.98\,T_\text{orb}$. Therefore, there are $7$ sensing sites, measuring $(x,y,z)$. The utility constraints are imposed at times $0.48\, T_\text{orb}$ and $4.82\, T_\text{orb}$, with $\trace{\vo{M}_{u_i}\Sigpp\vo{M}^T_{u_i}} \leq 1$. Privacy is maximized at the time $2.48\,T_\text{orb}$. Therefore, privacy is maximized in between the two times where utility is constrained. The privacy and utility time points are chosen arbitrarily.

\begin{figure}[htb]
\begin{subfigure}{0.32\textwidth}
\includegraphics[width=\textwidth]{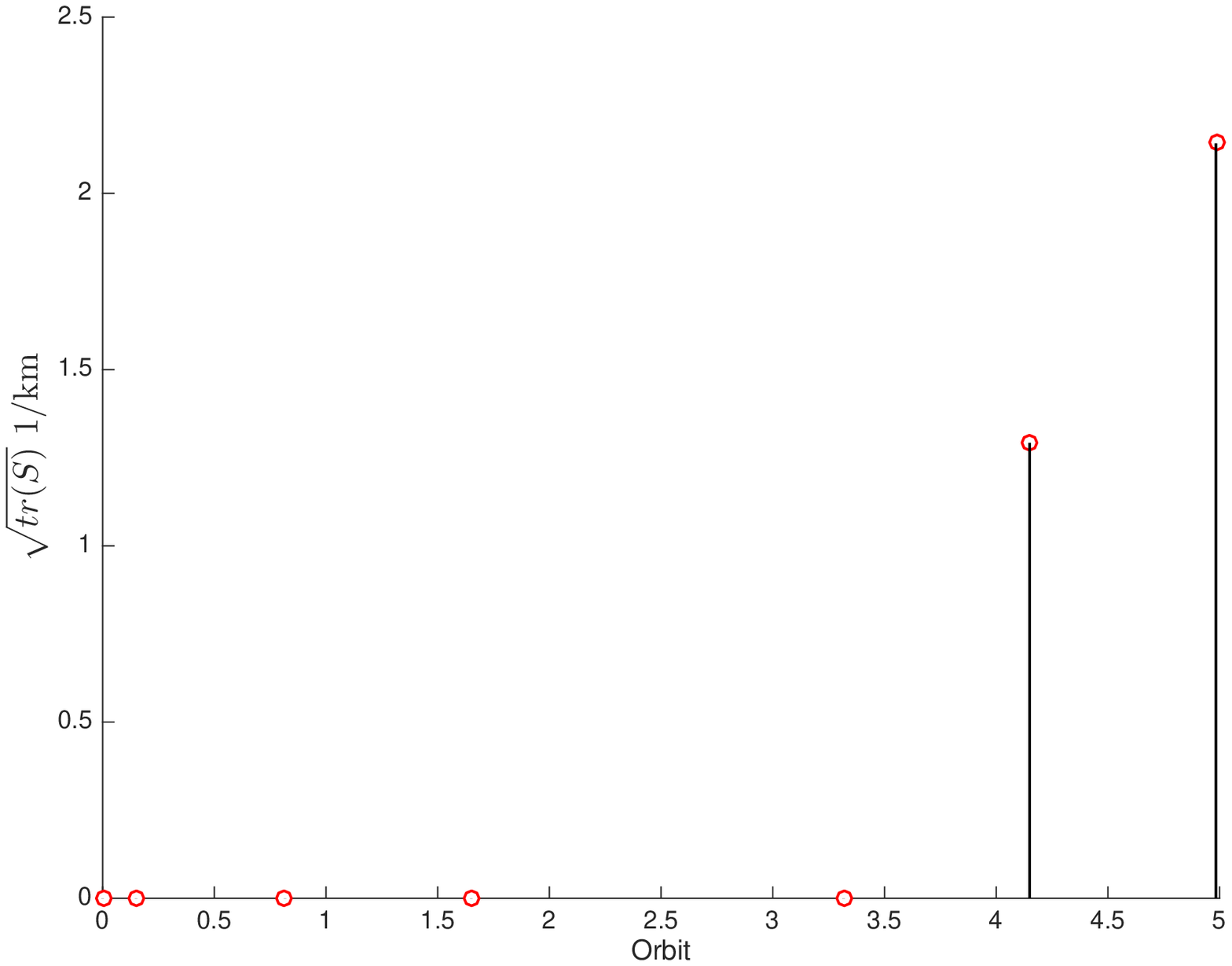}
\caption{Minimum precision satisfying only utility constraints: $ 0, 0, 0, 0, 0, 1.29, 2.14$.}
\figlabel{u-precision2}
\end{subfigure}\hfill
\begin{subfigure}{0.32\textwidth}
\includegraphics[width=\textwidth]{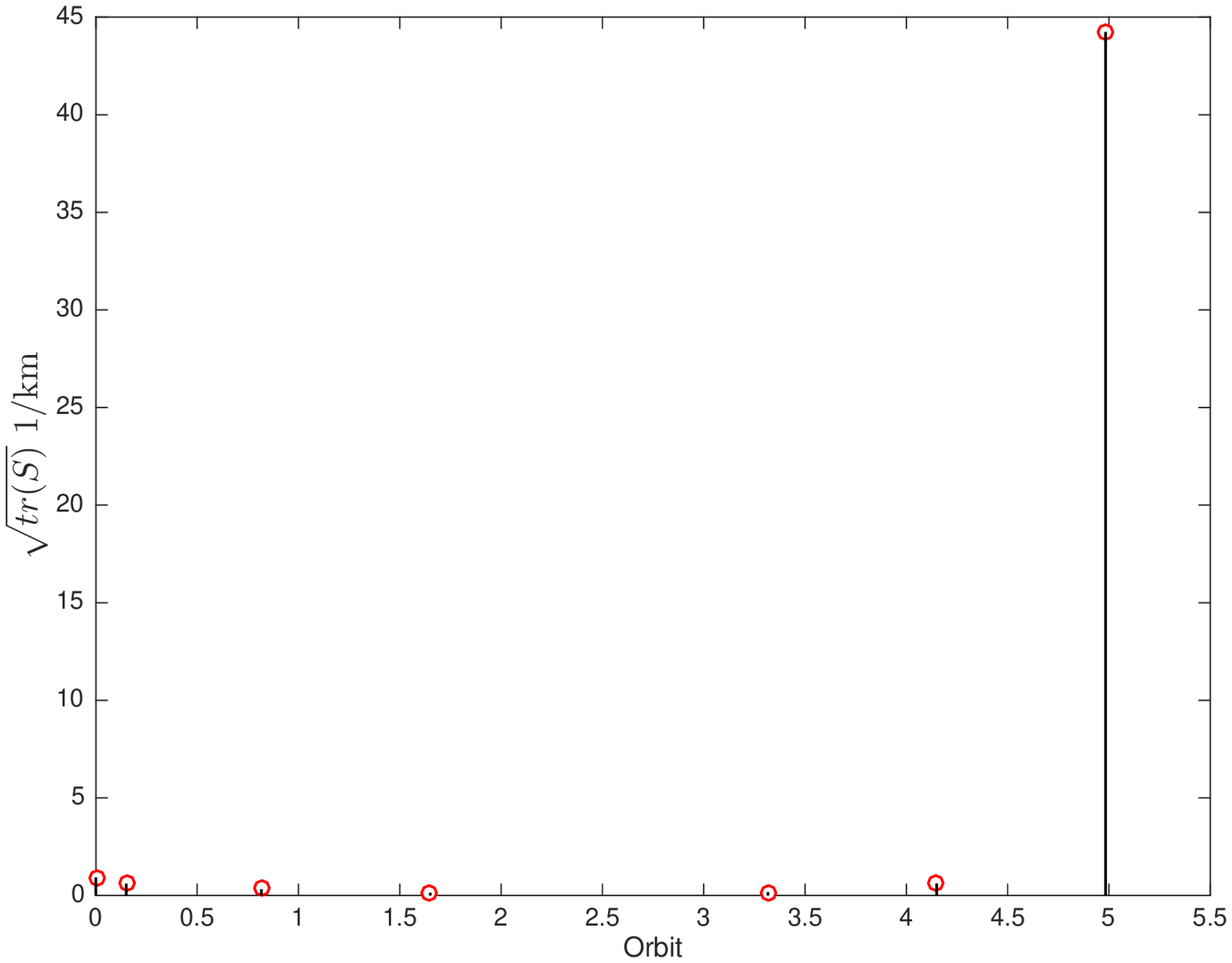}
\caption{Sensor precisions for maximum utility-aware privacy: $0.93, 0.61, 0.33, 0.13, 0.17, 0.6, 44.23$.}
\figlabel{up-precision2}
\end{subfigure}\hfill
\begin{subfigure}{0.32\textwidth}
\includegraphics[width=\textwidth]{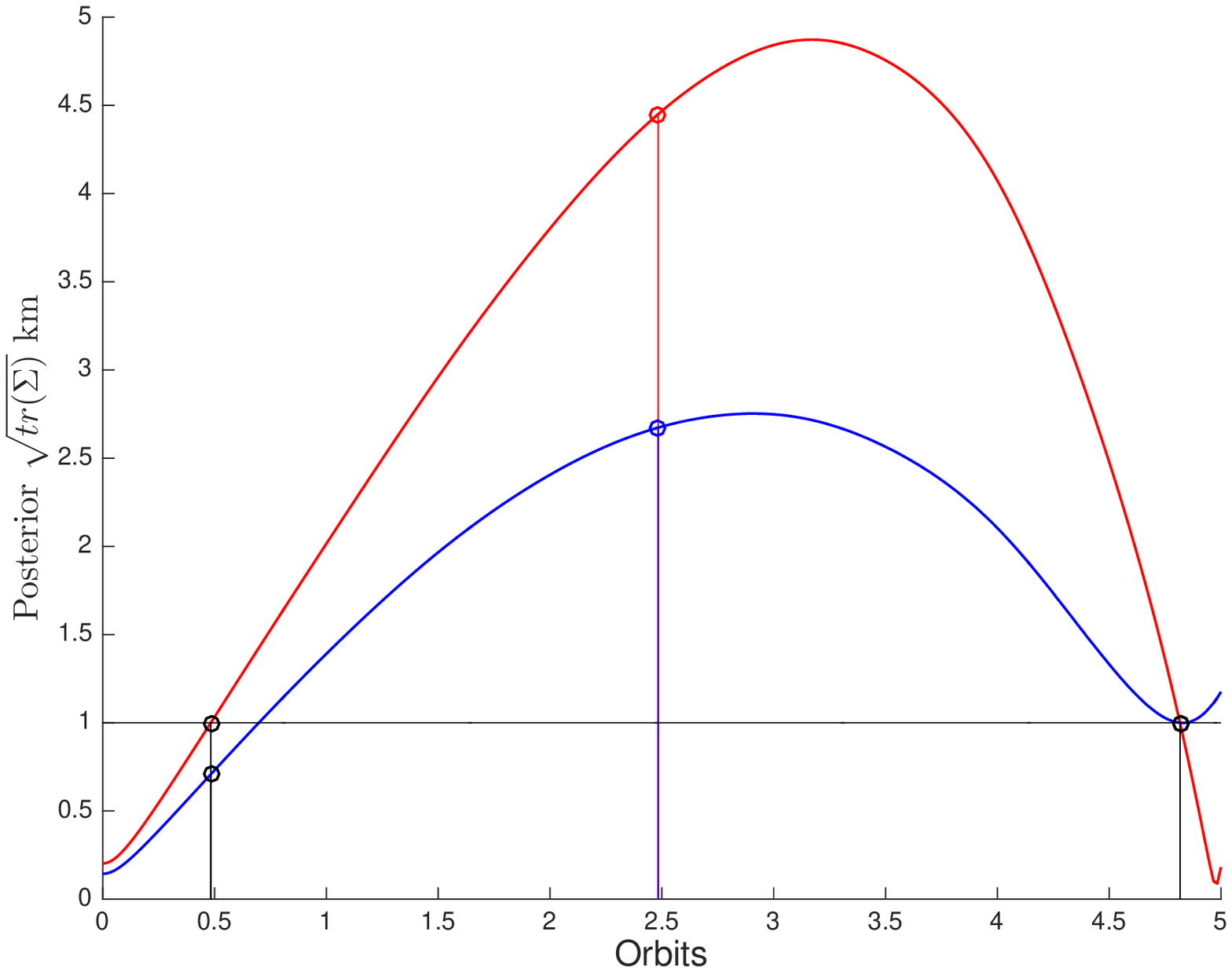}
\caption{Red: Posterior variance for maximum utility-aware privacy. Blue: Posterior variance satisfying utility constraints only.}
\figlabel{up-posterior2}
\end{subfigure}
\caption{Optimal sensor precision for utility-aware privacy over \underline{five} orbits of the ISS.}
\figlabel{up2}
\end{figure}

\Fig{up2} shows the results from the utility-aware privacy optimization for this case. In \fig{u-precision2}, the optimal sensor precisions that  satisfy the utility constraints are shown. We observe that data from sites \#1 to \#5 are not required to satisfy the utility constraints. Only data from sites \#6 and  \#7 are required, with the given precisions. The corresponding $\sqrt{\trace{\Sigpp}}$ is shown in \fig{up-posterior2} in blue, and we can see that the utility constraints are satisfied at times $0.48\, T_\text{orb}$ and $4.82\, T_\text{orb}$. Achieving the utility with minimum precision implicitly achieves a certain level of privacy at time $2.48\,T$, as we can see $\sqrt{\trace{\Sigpp}}$ increases between times $0.48\,T$ and $4.82\,T$. \Fig{up-posterior2} also shows in red, $\sqrt{\trace{\Sigpp}}$  from utility-aware privacy maximization. We can see that it achieves significantly higher privacy at time $2.48\,T$. The value of $\sqrt{\trace{\vo{M}_{p}\Sigpp\vo{M}^T_p}}$  from utility-only optimization is $2.67$, and the value of $\sqrt{\trace{\vo{M}_{p}\Sigpp\vo{M}^T_p}}$ from utility-aware privacy maximization is $4.45$, resulting in an improvement by a factor of $1.67$. The corresponding sensor precisions are shown in \fig{up-precision2}. Therefore, for this example too, we see that Algorithm \ref{alg:util-aware} is able to achieve higher privacy, while satisfying the utility constraints. 

We next discuss the convergence property of  Algorithm \ref{alg:util-aware}. For the results shown in \fig{up} and \fig{up2}, the respective convergences are shown in \fig{algo-convergence}, where $\Delta J:=|\gamma_p - \gamma_{p_\text{old}}|$.
\begin{figure}[htb]
\begin{subfigure}{0.45\textwidth}
\includegraphics[width=\textwidth]{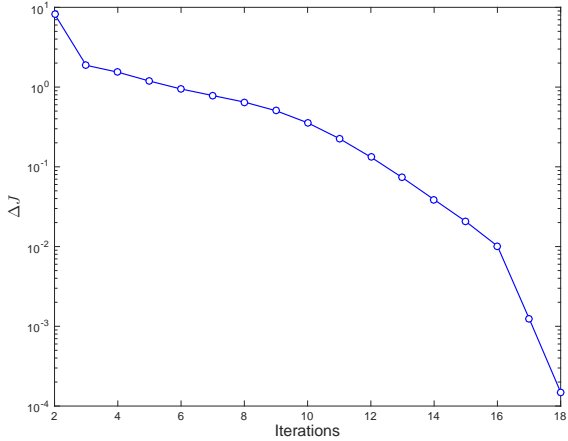}
\caption{Convergence of Algorithm \ref{alg:util-aware} for the 1-Orbit problem shown in \fig{up}.}
\figlabel{algo-conv1}
\end{subfigure}\hfill
\begin{subfigure}{0.45\textwidth}
\includegraphics[width=\textwidth]{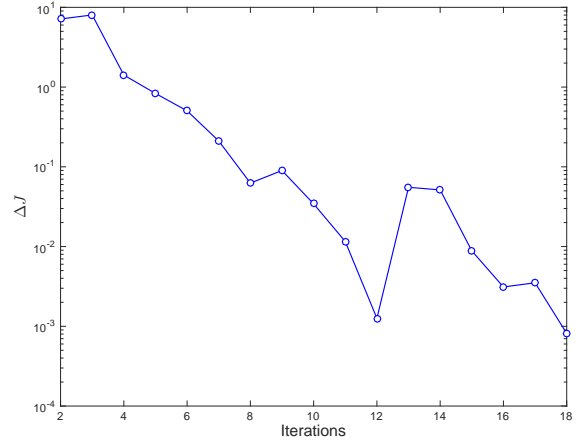}
\caption{Convergence of Algorithm \ref{alg:util-aware} for the 5-Orbit problem shown in \fig{up2}.}\figlabel{algo-conv2}
\end{subfigure}\hfill
\caption{Convergence of Algorithm \ref{alg:util-aware}.}
\figlabel{algo-convergence}
\end{figure}

Empirically, we observe that in both the cases the algorithm convergences to the optimal solution within $18$ iterations, for $\epsilon := 10^{-3}$. The associated computational times are summarized in table \ref{tab:cpu}. For every iteration, the optimization problem for the $1$-orbit problem takes an average of $3.5$ seconds. Whereas, for the $5$-orbit problem, optimizations take an average of $235.93$ seconds, which is higher due to larger problem size. The computational times for the utility-only optimization are also shown in the table. We observe that for $1$ and $5$ orbit problems, the computational times are $0.5$ and $28.26$ seconds respectively. These times are obtained in MATLAB, with YALMIP \cite{lofberg2004yalmip} as the parser and MOSEK \cite{andersen2000mosek} as the solver, executing in a Mac Book Pro with 2.3 GHz Quad-Core Intel Core i7 processor and 16 GB (1600 MHz DDR3) memory.

\begin{table}\begin{center}
\begin{tabular}{|c|cc|}\hline
 & \textbf{$1$-Orbit Problem} & \textbf{$5$-Orbit Problem}  \\[2mm] \hline
\textbf{Utility-aware Privacy} & $3.5$ & $235.93$ \\[2mm]
\textbf{Utility-Only} & $0.5$ & $28.26$\\ \hline
\end{tabular}
\end{center}
\caption{Average computational times (sec) for $1$ and $5$ orbit problems. These times are obtained in MATLAB, with YALMIP \cite{lofberg2004yalmip} as the parser and MOSEK \cite{andersen2000mosek} as the solver.}
\label{tab:cpu}
\end{table}

Therefore, the utility-only optimization results in significantly faster solution, but the privacy is not maximized. Whereas, with the utility-aware privacy maximization, we are able to achieve higher privacy but with significantly higher computational time. Since both the formulations solve a semi-definite program, the computational times are expected to grow polynomially using interior-point method. Therefore, for large-scale problems, specialized methods (such as first order methods) can be applied to solve the optimization problem.

\section{Conclusion \& Summary}
In this paper we presented a new formulation that addresses optimal privacy vs utility tradeoff in the Ensemble/Unscented Kalman filtering framework. Privacy and utility are defined in terms of the estimation error variances. The formulation achieves this tradeoff by injecting synthetic noise to the sensor data, which is used to regulate the posterior error covariance. 

We demonstrated how this formulation can be applied to achieve the tradeoff in the context of space-object tracking problem. We were able to show that it is possible to satisfy utility (defined by upper bounding the estimation error), while achieving a certain level of privacy (defined by lower-bounding the estimation error).

In particular, the presented results demonstrated that:
\begin{enumerate}
\item Utility upper-bounds can be satisfied with sparse sensing, which also indirectly helps in sensor scheduling. This is achieved by satisfying the utility constraints with maximum sensor noise or minimum sensor precision, where the precision is defined to be the inverse of noise. Sparseness in sensing is achieved by $l_1$ minimization.
\item Privacy lower-bounds can be satisfied by maximizing the noise in the sensor.
\item A joint optimization problem for utility-aware privacy is able to maximize privacy (i.e. maximize the lower bound), while satisfying utility upper bound. Similarly, a joint-optimization can maximize utility (i.e. minimize the upper bound), while satisfying the privacy lower bound.
\end{enumerate}

These results have significant implications from a data sharing perspective. The above optimization problems determine the level of synthetic noise that should be added to the raw sensed data, such that the desired privacy-utility tradeoff is achieved. This is important, because privacy concerns can result in conservative data obfuscation and severely impede utility. On the other hand, utility concerns can result in excessive sensing accuracy, which can violate privacy concerns and perhaps be uneconomical from a system design and operations perspective. Therefore, we expect our framework to enable better data collection and sharing policy, particularly for the SSA community. In addition, since SSA data is being commoditized, the proposed algorithms will enable data-products with different levels of accuracy, corresponding to various stratified pricing models for future value-added services in space-data analytics.

\section{Acknowledgements}
This research was sponsored by AFOSR DDDAS grant FA9550-15-1-0071 and Air Force STTR Phase-I  grant FA8750-18-C-0106 (subcontracted by Intelligent Fusion Technology, Inc.).
\bibliographystyle{unsrt}
\bibliography{paper}

\end{document}